\numberwithin{equation}{section}
\def\@tocline#1#2#3#4#5#6#7{\relax
  \ifnum #1>\c@tocdepth % then omit
  \else
    \par \addpenalty\@secpenalty\addvspace{#2}%
    \begingroup \hyphenpenalty\@M
    \@ifempty{#4}{%
      \@tempdima\csname r@tocindent\number#1\endcsname\relax
    }{%
      \@tempdima#4\relax
    }%
    \parindent\z@ \leftskip#3\relax \advance\leftskip\@tempdima\relax
    \rightskip\@pnumwidth plus4em \parfillskip-\@pnumwidth
    #5\leavevmode\hskip-\@tempdima
      \ifcase #1
       \or\or \hskip 1em \or \hskip 2em \else \hskip 3em \fi%
      #6\nobreak\relax
    \dotfill\hbox to\@pnumwidth{\@tocpagenum{#7}}\par
    \nobreak
    \endgroup
  \fi}
\NewDocumentCommand{\diamondinclusion}{m >{\SplitArgument{1}{\\}}m m}
 {%
  \dodiamondinclusion{#1}#2{#3}%
 }
\NewDocumentCommand{\dodiamondinclusion}{mmmm}
 {%
  \begingroup
  \setlength{\arraycolsep}{0pt}%
  \renewcommand{\arraystretch}{-2}%
  \begin{matrix}
  && #2 \\
  & \rsubsetneq{45} && \rsubsetneq{-45} \\
  #1 &&&& #4 \\
  & \rsubsetneq{-45} && \rsubsetneq{45} \\
  && #3
  \end{matrix}%
  \endgroup
 }
\NewDocumentCommand{\rsubsetneq}{m}
 {%
  \rotatebox[origin=c]{#1}{$\subsetneq$}%
 }
\newcommand{\mycomment}[1]{%
}% -------------------
\newcommand*{\norm}[1]{\left\lVert#1\right\rVert}
\newcommand*{\vertiii}[1]{{\left\vert\kern-0.25ex\left\vert\kern-0.25ex\left\vert #1 \right\vert\kern-0.25ex\right\vert\kern-0.25ex\right\vert}}
\newcommand\mathcircled[1]{%
  \mathpalette\@mathcircled{#1}%
}
\newcommand\@mathcircled[2]{%
  \tikz[baseline=(math.base)] \node[draw,circle,inner sep=1pt] (math) {$\m@th#1#2$};%
}
\providecommand{\customgenericname}{}
\newcommand{\newcustomtheorem}[2]{%
  \newenvironment{#1}[1]
  {%
   \renewcommand\customgenericname{#2}%
   \renewcommand\theinnercustomgeneric{##1}%
   \innercustomgeneric
  }
  {\endinnercustomgeneric}
}
\DeclareMathAlphabet{\mathcal}{OMS}{cmsy}{m}{n}
\DeclareMathOperator{\spn}{span}
\theoremstyle{plain}%plain/definition/remark
\newtheorem{theor}{Theorem}[section] %The numbering changes section by section (parent)
\newtheorem{lem}[theor]{Lemma} %The numbering is reset when the numbering of [theor] is reset (child)
\newtheorem{cor}[theor]{Corollary}
\newtheorem{prop}[theor]{Proposition}
\newtheorem*{cor*}{Corollary}
\newtheorem*{prop*}{Proposition}
\theoremstyle{definition}
\newtheorem{defin}[theor]{Definition}
\newtheorem*{defin*}{Definition}
\newtheorem{rem}[theor]{Remark}
\newtheorem{ex}[theor]{Example}
\newtheorem{no}[theor]{Notation}
\begin{document}
\title{Notion of $\mathbb{H}$-orientability for surfaces in the Heisenberg group $\mathbb{H}^n$}
\author[G. Canarecci]{Giovanni Canarecci}
\address{University of Helsinki \\ Department of Mathematics and Statistics \\
Helsinki, Finland} 
\email{giovanni.canarecci@helsinki.fi}
%\subjclass[2010]{Primary: 05C??. Secondary: 05C??}
\keywords{Heisenberg group, orientability, $\mathbb{H}$-orientability, $\mathbb{H}$-regularity, M{\"o}bius Strip} 
\begin{abstract} 
This paper aims to define and study a notion of orientability in the Heisenberg sense ($\mathbb{H}$-\emph{orientability}) for the Heisenberg group $\mathbb{H}^n$. In particular, we define such notion for $\mathbb{H}$-regular $1$-codimensional surfaces. 
Analysing the behaviour of a M{\"o}bius Strip in $\mathbb{H}^1$, %we find a subset that is a $1$-codimensional $\mathbb{H}$-regular surface but non-orientable in the Euclidean sense. 
we find a $1$-codimensional $\mathbb{H}$-regular, but not Euclidean-orientable, subsurface.  
Lastly we show that, for regular enough surfaces, $\mathbb{H}$-orientability implies Euclidean-orientability. As a consequence, we conclude that non-$\mathbb{H}$-orientable $\mathbb{H}$-regular surfaces exist in $\mathbb{H}^1$.
\end{abstract}
\maketitle
\tableofcontents

%-----------------------------------------------------------------------------------------------------------------------------------------------------
%-----------------------------------------------------------------------------------------------------------------------------------------------------
%-----------------------------------------------------------------------------------------------------------------------------------------------------

\section*{Introduction} 
The aim of this paper is to define and study a notion of orientability in the Heisenberg sense ($\mathbb{H}$-\emph{orientability}) for the Heisenberg group $\mathbb{H}^n$. 
There exist many references for an introduction about the Heisenberg group; here we use, for example, parts of \cite{GCmaster}, \cite{CDPT}, \cite{FSSC} and \cite{TRIP}.  
%The Heisenberg group $\mathbb{H}^n$, $n \geq 1$, is the $(2n+1)$-dimensional manifold $ ( \mathbb{R}^{2n+1}, * , d_{cc})$, where $*$ is the group product and $d_{cc}$ the Carnot--Carath{\'e}odory distance. 
The Heisenberg group $\mathbb{H}^n$, $n \geq 1$, is the $(2n+1)$-dimensional manifold $ \mathbb{R}^{2n+1}$ with a non-Abelian group product and  the Carnot--Carath{\'e}odory distance.   
Such a group has two important automorphisms playing a role in its geometry: left translations and anisotropic dilations. 
Additionally, the Heisenberg group is a Carnot group of step $2$ with Lie algebra $\mathfrak{h} = \mathfrak{h}_1 \oplus \mathfrak{h}_2$. 
%The first layer $\mathfrak{h}_1$ has a standard orthonormal basis of left invariant vector fields which are called \textit{horizontal}:
The \textit{horizontal} layer $\mathfrak{h}_1$ has a standard orthonormal basis of left invariant vector fields,  
$ X_j=\partial_{x_j} -\frac{1}{2} y_j \partial_t$ and $Y_j=\partial_{y_j} +\frac{1}{2} x_j \partial_t$ for $ j=1,\dots,n$,     
%$$
%\begin{cases}
%X_j=\partial_{x_j} -\frac{1}{2} y_j \partial_t, \\
%Y_j=\partial_{y_j} +\frac{1}{2} x_j \partial_t, 
%\end{cases}
%\quad j=1,\dots,n.
%$$
which hold the core property that $[X_j, Y_j] = \partial_t=:T $ for each $j$. %, where 
$T$ alone spans the second layer $\mathfrak{h}_2$ and is called the \textit{vertical} direction. By definition, the horizontal subbundle changes inclination at every point, allowing movement from any point to any other point following only horizontal paths; this allows to define the Carnot--Carath{\'e}odory distance $d_{cc}$, measured along curves whose tangent vector fields are horizontal. An equivalent standard distance in $\mathbb{H}^n$ is the Kor{\'a}nyi distance. The topological dimension of the Heisenberg group is $2n+1$, while its Hausdorff dimension with respect to the Carnot--Carath{\'e}odory and Kor{\'a}nyi distances is $2n+2$; such difference hints to the existence of a natural cohomology called Rumin cohomology %and introduced by Rumin in 1994 
(see Rumin \cite{RUMIN}), whose behaviour is significantly different from the standard de Rham one (see also \cite{GClicentiate}).  Another consequence of the dimensional difference is the existence of surfaces regular in the Heisenberg sense but fractal in the Euclidean sense (see Kirchheim and Serra Cassano \cite{KSC}).\\
%Second Part
In the second part of the paper we discuss the notion of $\mathbb{H}$-regularity for low dimensional and low codimensional surfaces in the Heisenberg group (see Franchi, Serapioni and Serra Cassano \cite{FSSC2001} and \cite{FSSC}), which, in the codimensional case, requires the surface to be locally the level set of a function with non-vanishing horizontal gradient. The points where such gradient is null are called characteristic (see, for instance, Balogh \cite{BAL} and Magnani \cite{MAG}) and cannot be part of a $\mathbb{H}$-regular surface. After that, we analyse the behaviour of a specific M{\"o}bius Strip $\mathcal{M} \subseteq \mathbb{H}^1$ and find a subset $\widetilde{\mathcal{M}}\subseteq \mathcal{M}$ that is a $1$-codimensional $C^1$-Euclidean surface with no characteristic points ($ C( \widetilde{\mathcal{M}} ) = \varnothing  $)  and non-orientable in the Euclidean sense:
\begin{prop*}[\ref{Mobius}]
The M\"obius strip $\mathcal{M}$ contains at most one characteristic point $\tilde p$ and so there exists a $1$-codimensional  $C^1$-Euclidean surface $\widetilde{\mathcal{M}}  \subseteq \mathcal{M}$ such that $\tilde p  \notin \widetilde{\mathcal{M}}$, $\widetilde{\mathcal{M}}$ is still non-orientable in the Euclidean sense and  $C( \widetilde{\mathcal{M}} ) = \varnothing$.
\end{prop*}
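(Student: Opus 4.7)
The plan is three-fold: (i) realise $\mathcal{M}$ as an explicit parametrised surface in $\mathbb{H}^1\cong\mathbb{R}^3$ and translate the characteristic condition into an equation in the parameters, (ii) show by direct algebraic manipulation that this equation has at most one solution, and (iii) prove that removing a single interior point preserves the non-orientability of the Möbius strip.

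For step (i) I fix a standard parametrisation
\[
\Phi(u,v)=\bigl((R+v\cos(u/2))\cos u,\ (R+v\cos(u/2))\sin u,\ v\sin(u/2)\bigr),
\]
with $u\in[0,2\pi]$, $v\in(-\varepsilon,\varepsilon)$ and $R>0$ large enough to guarantee that $\Phi$ is an immersion whose image coincides with $\mathcal{M}$. A point $p=(x,y,t)\in\mathcal{M}$ is characteristic iff the horizontal plane $\mathrm{span}\{X_p,Y_p\}$ equals $T_p\mathcal{M}$, equivalently iff a Euclidean normal $\nu=(\nu_1,\nu_2,\nu_3)$ of $\mathcal{M}$ at $p$ satisfies
\[
\nu_1=\tfrac{y}{2}\nu_3,\qquad \nu_2=-\tfrac{x}{2}\nu_3.
\]
Computing $\Phi_u\times\Phi_v$ and substituting reduces this to a trigonometric system in $(u,v)$. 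Step (ii) is where the main technical obstacle lies: one must exploit the $u\mapsto u+2\pi$ symmetry, separate the cases $\cos(u/2)=0$ and $\cos(u/2)\neq 0$, and show that the resulting equations collapse to at most one admissible pair $(u_0,v_0)$, producing the unique candidate $\tilde p:=\Phi(u_0,v_0)$ (or no characteristic point at all).

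Given such $\tilde p$, I set $\widetilde{\mathcal{M}}:=\mathcal{M}\setminus\{\tilde p\}$. It is immediate that $\widetilde{\mathcal{M}}$ is a $C^1$-Euclidean $1$-codimensional surface, and by construction $C(\widetilde{\mathcal{M}})=\varnothing$. The only non-trivial point is Euclidean non-orientability, which I would prove by contradiction. Suppose $\widetilde{\mathcal{M}}$ admits a continuous unit Euclidean normal field $\nu$. Since $\tilde p$ is an interior point of the $2$-dimensional surface $\mathcal{M}$, choose a small coordinate disk $U\subseteq\mathcal{M}$ around $\tilde p$. Then $U\setminus\{\tilde p\}$ is connected (being a punctured $2$-disk), and $U$ itself is orientable, so the restriction $\nu|_{U\setminus\{\tilde p\}}$ must coincide with exactly one of the two continuous unit normals of $U$ and therefore extends continuously across $\tilde p$. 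Gluing this extension with $\nu$ on $\widetilde{\mathcal{M}}$ yields a continuous unit Euclidean normal on the whole $\mathcal{M}$, contradicting the classical non-orientability of the Möbius strip and concluding the proof.
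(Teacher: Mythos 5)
Your setup coincides with the paper's: the same parametrisation, and the same translation of the characteristic condition into the vanishing of the two horizontal components of the Euclidean normal (your conditions $\nu_1=\tfrac{y}{2}\nu_3$, $\nu_2=-\tfrac{x}{2}\nu_3$ are exactly $\langle\nu,X\rangle=\langle\nu,Y\rangle=0$, i.e.\ $\vec N_1=\vec N_2=0$ in the paper's notation). The genuine gap is your step (ii): the entire content of the claim ``at most one characteristic point'' is the assertion that this trigonometric system has at most one solution, and you only announce that one must ``show that the resulting equations collapse to at most one admissible pair'' without doing it. A priori nothing rules out a curve of solutions or several isolated ones, so the proposition is not yet proved. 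For comparison, the paper carries the computation out (deferring algebraic details to a cited reference): $\vec N_1=0$ forces either $r=0$ or $s=\frac{-(R+1)z^2+1\pm\sqrt{z^4-(R+2)z^2+1}}{z^3}$ with $z=\cos(r/2)$, and substitution into $\vec N_2=0$ leaves only the single pair $(r,s)=\left(0,\frac{-2R+1-\sqrt{-4R+1}}{2}\right)$, which exists only for $0<R<\frac14$ and corresponds to $\tilde p=\left(\frac12-\sqrt{\frac14-R},0,0\right)$. You would need to supply an argument of this kind, or some other structural reason the solution set is at most a point, for the proof to stand.

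Your step (iii), by contrast, is correct and is in fact handled more carefully than in the paper: the paper removes a neighbourhood $U_{\tilde p}$ and simply asserts that $\mathcal{M}\setminus U_{\tilde p}$ remains non-orientable, whereas your puncture-and-extend argument (a continuous unit normal on the connected punctured disk must agree with one of the two continuous unit normals of the disk, hence extends across the puncture, contradicting the non-orientability of $\mathcal{M}$) makes this step rigorous. Removing the single point rather than a neighbourhood is harmless for the statement as given, since $C^1$-regularity and the absence of characteristic points are local conditions.
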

In particular $\widetilde{\mathcal{M}}$ is a $1$-codimensional $\mathbb{H}$-regular surface which is non-orientable in the Euclidean sense (this is done in Subsection \ref{subsec:Mobius}). The idea is to consider a M\"obius strip surface $\mathcal{M}$ which is $1$-codimensional in $\mathbb{H}^1$ and $C^1$-Euclidean; given this surface, we check the existence of its characteristic points, meaning those points where $\mathcal{M}$ fails to respect the $\mathbb{H}$-regularity condition. \\
For $1$-codimensional $C^1$-Euclidean surfaces with no characteristic points we give a new definition of orientability ($\mathbb{H}$-orientability) as follows:
\begin{defin*}[\ref{def:Horientable}]
Consider a $1$-codimensional $C^1$-Euclidean surface $S \subseteq \mathbb{H}^n$ with $C(S) = \varnothing$. We say that $S$ is $\mathbb{H}$\emph{-orientable} (or \emph{orientable in the Heisenberg sense})  if there exists a continuous global $1$-vector field 
$$
n_{\mathbb{H}}=\sum_{i=1}^{n} \left ( n_{\mathbb{H},i} X_i + n_{\mathbb{H},n+i} Y_i \right ) \neq 0,
$$
defined on $S$ so that $n_{\mathbb{H}} \perp_H S$.
\end{defin*}
As expected, such definition is invariant under left translations $\tau_q p =q*p$ and the anisotropic dilations $\delta_r(x,y,t)=(rx,ry,r^2t)$ for $\mathbb{H}$-regular $1$-codimensional surfaces.\\
Lastly we show that, for regular enough surfaces, $\mathbb{H}$-orientability implies orientability in the Euclidean sense, while the other direction requires more hypotheses:
\begin{prop*}[\ref{finalmente4}]
Consider a $1$-codimensional $C^1$-Euclidean surface $S$ in $\mathbb{H}^{n}$ with $C(S) = \varnothing$. If $S$ is  $C^2_\mathbb{H}$-regular, then: $S$  is  $\mathbb{H}$-orientable implies $S$  is Euclidean-orientable.
%\begin{equation*}
%S \text{ is } \mathbb{H}\text{-orientable implies }   S \text{ is Euclidean-orientable } .
%\end{equation*}
\end{prop*}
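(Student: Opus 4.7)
The plan is to promote a global continuous horizontal normal $n_{\mathbb{H}}$ on $S$ to a global continuous Euclidean unit normal on $S$. The guiding observation is that, at every non-characteristic point, the horizontal projection of the Euclidean gradient of a local defining function coincides, up to sign and normalization, with $n_{\mathbb{H}}$; hence a coherent global choice of the horizontal half of the normal should force a coherent global choice of the full Euclidean normal.

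Patchwise, for each $p\in S$ I would choose a $C^1$-Euclidean defining function $f_p : U_p \to \mathbb{R}$ with $\nabla^E f_p \neq 0$ on $U_p$ and $S\cap U_p = \{f_p=0\}$. Since $p\notin C(S)$, the horizontal gradient $\nabla_H f_p = \sum_{j=1}^n (X_j f_p) X_j + (Y_j f_p) Y_j$ is continuous and nowhere vanishing on $S\cap U_p$, so $\pm\,\nabla_H f_p / |\nabla_H f_p|$ are the two continuous candidates for the horizontal unit normal on that patch. Exploiting the continuity of $n_{\mathbb{H}}$ and, if needed, shrinking $U_p$ so that $S\cap U_p$ is connected, I may replace $f_p$ by $-f_p$ to obtain
$$
\frac{\nabla_H f_p}{|\nabla_H f_p|} \;=\; \frac{n_{\mathbb{H}}}{|n_{\mathbb{H}}|} \qquad \text{on } S\cap U_p,
$$
and then define $\nu^E_p := \nabla^E f_p / |\nabla^E f_p|$, which is a continuous Euclidean unit normal on $S\cap U_p$.

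The technical core is the gluing step. On an overlap $U_p\cap U_q$, a standard fact for $C^1$-Euclidean defining functions of the same hypersurface provides a continuous, nowhere-vanishing $\lambda$ with $f_q = \lambda f_p$; the product rule evaluated on $S=\{f_p=0\}$ yields $\nabla^E f_q = \lambda \nabla^E f_p$, and in particular $\nabla_H f_q = \lambda \nabla_H f_p$. Passing to unit horizontal normals,
$$
\frac{\nabla_H f_q}{|\nabla_H f_q|} \;=\; \sgn(\lambda)\,\frac{\nabla_H f_p}{|\nabla_H f_p|},
$$
and since both sides equal $n_{\mathbb{H}}/|n_{\mathbb{H}}|$ by the sign-matching, $\sgn(\lambda)=+1$ throughout the overlap; therefore $\nu^E_q = \nu^E_p$ there. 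The local normals glue into a global continuous Euclidean unit normal, which is exactly Euclidean-orientability.

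The subtle point I expect to be the main obstacle is the rigorous identification, on each patch, of the intrinsic $\mathbb{H}$-normal appearing in Definition \ref{def:Horientable} with the horizontal projection of the Euclidean gradient of a $C^1$-Euclidean defining function. This is precisely where the $C^2_{\mathbb{H}}$-regularity hypothesis is expected to enter: it should supply enough intrinsic smoothness of the horizontal normal field to legitimately compare it, continuously and up to sign, with the horizontal part of $\nabla^E f_p$. Once this compatibility is secured on each patch, the sign-matching and gluing described above close the argument.
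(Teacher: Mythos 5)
Your argument is essentially sound, but it takes a genuinely different route from the paper's, and you have misdiagnosed where the $C^2_{\mathbb{H}}$ hypothesis enters. The paper does not patch local Euclidean normals at all: it builds the global field $n_E$ directly from the global $n_{\mathbb{H}}$ by the explicit formulas
$$
n_{E,2n+1} := \frac{1}{n}\sum_{j=1}^{n}\left( X_j n_{\mathbb{H},n+j} - Y_j n_{\mathbb{H},j}\right), \qquad
n_{E,i} := n_{\mathbb{H},i} + \tfrac{1}{2}y_i\, n_{E,2n+1}, \qquad
n_{E,n+i} := n_{\mathbb{H},n+i} - \tfrac{1}{2}x_i\, n_{E,2n+1},
$$
observing that locally, when $n_{\mathbb{H}}=\lambda\nabla_{\mathbb{H}}f$ with $f\in C^1_{\mathbb{H}}$, the first expression collapses via $[X_j,Y_j]=T$ to $Tf=\partial_t f$, so that $n_E$ is locally $\nabla f$; non-vanishing is then checked by splitting into $Tf\neq 0$ and $Tf=0$, the latter case being handled by $\nabla_{\mathbb{H}}f\neq 0$. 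This is exactly where $C^2_{\mathbb{H}}$-regularity is consumed: one must differentiate the components of $n_{\mathbb{H}}$ to reconstruct the missing vertical component of the Euclidean normal, and indeed to know that $\partial_{x_i}f,\partial_{y_i}f,\partial_t f$ exist at all, since the defining function supplied by $\mathbb{H}$-regularity is only $C^1_{\mathbb{H}}$. Your route gets the vertical component for free by working with Euclidean $C^1$ defining functions (available because $S$ is assumed $C^1$-Euclidean), and the step you single out as the main obstacle --- the pointwise parallelism of $n_{\mathbb{H}}$ and $\nabla_{\mathbb{H}}f_p$ --- owes nothing to $C^2_{\mathbb{H}}$: at a non-characteristic point the $\langle\cdot,\cdot\rangle_H$-orthocomplement in $\mathfrak{h}_1$ of the horizontal tangent vectors is one-dimensional and spanned by $\nabla_{\mathbb{H}}f_p$, and any two Euclidean defining functions have proportional Euclidean (hence proportional horizontal) gradients on $S$, so the identification follows from $C(S)=\varnothing$ and the definition of $\mathbb{H}$-orthogonality alone; for the overlap step you also do not need the Hadamard-type factorisation $f_q=\lambda f_p$, since proportionality of the two gradients on $S$ is automatic from both being non-zero annihilators of the same tangent hyperplane. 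Once this is written out, your sign-matching and gluing close the argument without ever invoking $C^2_{\mathbb{H}}$-regularity, so your proof, completed, gives a slightly stronger statement than the paper's; the trade-off is that the paper's construction is coordinate-explicit and global in one stroke, whereas yours is softer and more elementary but reveals the extra regularity to be an artifact of the paper's method rather than of the implication itself.
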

 As a consequence, we conclude that non-$\mathbb{H}$-orientable $\mathbb{H}$-regular surfaces exist in $\mathbb{H}^1$:
\begin{cor*}[\ref{cor:existence}]
There exist $\mathbb{H}$-regular surfaces which are not $\mathbb{H}$-orientable in $\mathbb{H}^1$.
\end{cor*}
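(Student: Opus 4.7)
The strategy is to combine the two previous propositions. By Proposition \ref{Mobius}, there exists a $1$-codimensional $C^1$-Euclidean subsurface $\widetilde{\mathcal{M}} \subseteq \mathcal{M}$ of the Möbius strip in $\mathbb{H}^1$ with $C(\widetilde{\mathcal{M}}) = \varnothing$ which is non-orientable in the Euclidean sense. Since $\widetilde{\mathcal{M}}$ is $C^1$-Euclidean with empty characteristic set, it is automatically $\mathbb{H}$-regular: locally it is a level set of a $C^1$-Euclidean function whose horizontal gradient does not vanish, which is precisely the defining condition for $1$-codimensional $\mathbb{H}$-regularity. Thus $\widetilde{\mathcal{M}}$ witnesses the existence of an $\mathbb{H}$-regular surface in $\mathbb{H}^1$, and the only remaining task is to show it cannot be $\mathbb{H}$-orientable.

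The main step is a proof by contradiction using Proposition \ref{finalmente4}. Assume $\widetilde{\mathcal{M}}$ is $\mathbb{H}$-orientable. To invoke Proposition \ref{finalmente4}, one needs $\widetilde{\mathcal{M}}$ to be $C^2_\mathbb{H}$-regular. I would verify this by noting that the Möbius strip considered in Subsection \ref{subsec:Mobius} is constructed from smooth (in fact $C^\infty$-Euclidean) defining data, so after removing at most one characteristic point it remains smooth, and a sufficiently smooth Euclidean hypersurface with non-vanishing horizontal gradient is $C^2_\mathbb{H}$-regular. Given this, Proposition \ref{finalmente4} forces $\widetilde{\mathcal{M}}$ to be Euclidean-orientable, contradicting the conclusion of Proposition \ref{Mobius}. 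Hence $\widetilde{\mathcal{M}}$ is not $\mathbb{H}$-orientable, and the corollary follows.

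The main obstacle is exactly the $C^2_\mathbb{H}$-regularity hypothesis needed to apply Proposition \ref{finalmente4}: it must be checked explicitly (or noted by reference back to the explicit construction of $\mathcal{M}$ in Subsection \ref{subsec:Mobius}) that the Möbius strip used to produce $\widetilde{\mathcal{M}}$ is smooth enough that a local defining function is $C^2_\mathbb{H}$, not merely $C^1$-Euclidean. Once this regularity is in place, the rest is a short logical chain: $\widetilde{\mathcal{M}}$ is $\mathbb{H}$-regular by Proposition \ref{Mobius}, and the assumption of $\mathbb{H}$-orientability together with Proposition \ref{finalmente4} collides with Euclidean non-orientability from Proposition \ref{Mobius}.
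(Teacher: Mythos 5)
Your proof follows essentially the same route as the paper: it combines Proposition \ref{Mobius} with the contrapositive of implication \eqref{.2} in Proposition \ref{finalmente4} applied to $\widetilde{\mathcal{M}}$. You are in fact slightly more explicit than the paper in flagging that the $C^2_\mathbb{H}$-regularity hypothesis must be checked from the smoothness of the M\"obius parametrisation, a point the paper passes over by simply asserting that $\widetilde{\mathcal{M}}$ satisfies the hypotheses of Proposition \ref{finalmente4}.
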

%Future
One of the reasons behind this study is the important role that orientability plays in the theory of currents.  
Standard theory of currents requires orientability for certain surfaces although, in Riemannian geometry, there exists a notion of currents for surfaces that are not necessarily orientable (see, for instance, Morgan \cite{MORGAN2}).   
In the Heisenberg group, $\mathbb{H}$-regular $\mathbb{H}$-orientable surfaces can still be associated to currents, but it was not known whether it was meaningful to study a second notion for not necessarily orientable surfaces.  
We have shown that it is indeed a meaningful task.\\
%Currents are linear functionals on the space of differential forms and can be identified, with some hypotheses, with $\mathbb{H}$-regular surfaces which are usually, but not always, orientable.  
% In Riemannian geometry the two cases are treated differently and there exists a notion of currents (currents mod $2$) made for surfaces that are not necessarily orientable (see, for instance, Morgan \cite{MORGAN2}). 
%In the Heisenberg group $\mathbb{H}$-regular $\mathbb{H}$-orientable surfaces can be associated to currents, but with different hypotheses than the Riemannian case and so it was not clear whether it was meaningful to study currents mod $2$ in this setting. We have shown that it is a meaningful task to study currents mod $2$ also in the Heisenberg group.\\

\noindent
{\bf Acknowledgments.}  
I would like to thank my adviser, university lecturer Ilkka Holopainen, for the work done together and the time dedicated to me. I also want to thank professor Bruno Franchi, for the time I spent in Bologna, and professors Raul Serapioni and Pierre Pansu, for the stimulating discussions.

%-----------------------------------------------------------------------------------------------------------------------------------------------------
%-----------------------------------------------------------------------------------------------------------------------------------------------------
%-----------------------------------------------------------------------------------------------------------------------------------------------------

\section{Preliminaries}

In this section we introduce the Heisenberg group $\mathbb{H}^n$, its structure as a Carnot group and the standard bases of vector fields and differential forms in $\mathbb{H}^n$. Then we mention the standard \emph{Kor{\'a}nyi} and \emph{Carnot--Carath{\'e}odory} distances.\\%, as well as the topology and dimensions of $\mathbb{H}^n$.\\
There exist many good references for an introduction on the Heisenberg group; we follow mainly sections 2.1 and 2.2 in \cite{FSSC} and section 2.1.3 and 2.2 in \cite{CDPT}.

\subsection{The Heisenberg Group $\mathbb{H}^n$}\label{defH}

\begin{defin}\label{Heisenberg_Group}             %GC
The $n$-dimensional \emph{Heisenberg Group} $\mathbb{H}^n$ is defined as 
$
\mathbb{H}^n:= (\mathbb{R}^{2n+1}, * ),
$ 
where $*$ is the product
$$
(x,y,t)*(x',y',t') := \left  (x+x',y+y', t+t'- \frac{1}{2} \langle J
 \begin{pmatrix} 
x \\
y
\end{pmatrix} 
, 
 \begin{pmatrix} 
x' \\
y' 
\end{pmatrix} \rangle_{\mathbb{R}^{2n}} \right  ),
$$
with $x,y,x',y' \in \mathbb{R}^n$, $t,t' \in \mathbb{R}$ and $J=  \begin{pmatrix} 
 0 &  I_n \\
-I_n & 0
\end{pmatrix} $. 
It is common to write $x=(x_1,\dots,x_n) \in \mathbb{R}^n$. Furthermore, with a simple computation of the matrix product, we immediately have that
$$
(x,y,t)*(x',y',t') := \left  (x+x',y+y', t+t' + \frac{1}{2} \sum_{j=1}^n \left ( x_j y_j'  -  y_j x_j'  \right ) \right  ).
$$
\end{defin}

\noindent
One can verify that the Heisenberg group $\mathbb{H}^n$ is a Lie group, meaning that the internal operations of product and inverse are both differentiable.\\
In the Heisenberg group $\mathbb{H}^n$ there are two important groups of automorphisms; the first one is the left translation 
\begin{align*}
\tau_q : \mathbb{H}^n  \to \mathbb{H}^n, \ p \mapsto q*p,
\end{align*}
and the second one is the ($1$-parameter) group of the \emph{anisotropic dilations $\delta_r$}, with $r>0$:
\begin{align*}
\delta_r : \mathbb{H}^n   \to \mathbb{H}^n, \ (x,y,t)  \mapsto (rx,ry,r^2 t).
\end{align*}

%-----------------------------------------------------------------------------------------------------------------------------------------------------
%-----------------------------------------------------------------------------------------------------------------------------------------------------
%-----------------------------------------------------------------------------------------------------------------------------------------------------

\subsection{Left Invariance and Horizontal Structure on $\mathbb{H}^n$}\label{lefthor}

The standard basis of vector fields in the Heisenberg group $\mathbb{H}^n$ gives it the structure of Carnot group. By duality, we also introduce its standard basis of differential forms.
%General references are section 2.1 in \cite{GCmaster} and sections 2.1 and 2.2 in \cite{FSSC}.

\begin{defin}%$\\$      %GC
\label{XYT}
The standard basis of left invariant vector fields in $\mathbb{H}^n$
% for the tangent space $T(\mathbb{H}^n)$ 
consists of the following: % $2n+1$ vector fields:
 $$
\begin{cases}
X_j &:= \partial_{x_j} - \frac{1}{2} y_j\partial_{t} \quad \emph{\emph{ for }}  j=1,\dots,n , \\
Y_j &:= \partial_{y_j} + \frac{1}{2} x_j\partial_{t} \quad \emph{\emph{ for }}  j=1,\dots,n,  \\
T &:= \partial_{t}.
\end{cases}
 $$
\end{defin}

\noindent
One can observe that $\{ X_1,\dots,X_n,Y_1,\dots,Y_n,T \}$ becomes $\{ \partial_{x_1},\dots, \partial_{x_n}, \partial_{y_1},\dots,\partial_{y_n}, \partial_{t} \}$ at the neutral element. Another easy observation is that the only non-trivial commutators of the vector fields $X_j,Y_j$ and $T$ are
$$
[X_j,Y_j]=T  \quad\emph{\emph{ for }} j=1,\dots,n.
$$
This immediately tells that all the higher-order commutators are zero and that the Heisenberg group is a Carnot group of step $2$. Indeed we can write its Lie algebra $\mathfrak{h}$ as 
$
\mathfrak{h} =\mathfrak{h}_1 \oplus \mathfrak{h}_2,
$ 
with
$$
\mathfrak{h}_1 = \spn \{ X_1,  \ldots, X_n, Y_1, \ldots, Y_n \} \quad \text{and} \quad \mathfrak{h}_2 =\spn \{ T \}.
$$
Conventionally one calls $\mathfrak{h}_1$ the space of \emph{horizontal} and $\mathfrak{h}_2$ the space of \emph{vertical vector fields}.

\noindent
The vector fields $\{ X_1,\dots,X_n,Y_1,\dots,Y_n\}$ are homogeneous of order $1$ with respect to the dilation $\delta_r, \  r \in \mathbb{R}^+$, i.e.,
$$
X_j (f\circ \delta_r)=r X_j(f)\circ \delta_r \quad  \text{ and }   \quad   Y_j (f\circ \delta_r)=r Y_j(f)\circ \delta_r ,
$$
where $f \in C^1 (U, \mathbb{R} )$, $U\subseteq \mathbb{H}^n$ open and $j=1,\dots,n$. On the other hand, the vector field $T$ is homogeneous of order $2$, i.e.,
$$
T(f\circ \delta_r)=r^2T(f)\circ \delta_r.
$$
It is not a surprise, then, that the homogeneous dimension of $\mathbb{H}^n$ is $Q=2n+2.$

\noindent
The vector fields $X_1,\dots,X_n,Y_1,\dots,Y_n,T$ form an orthonormal basis of $\mathfrak{h}$ with a scalar product $\langle \cdot , \cdot \rangle $. In the same way, $X_1,\dots,X_n,Y_1,\dots,Y_n$ form an orthonormal basis of $\mathfrak{h}_1$ with a scalar product $\langle \cdot , \cdot \rangle_H $ defined purely on $\mathfrak{h}_1$.\\

\begin{comment}%%%%%%%%%%%%%%%%%%%%%%%%%%%%%%%%%%%%%%%%%%%%%%%%%
Sometimes it will be useful to consider all the elements of the basis of $\mathfrak{h}$ with one symbol; to do so, we write
$$
\begin{cases}
W_j &:= X_j \quad \text{ for } j=1,\dots,n,\\
W_{n+j} &:= Y_j  \quad \text{ for } j=1,\dots,n,\\
W_{2n+1}&:=T.
\end{cases}
$$
In the same way, the point $(x_1,\dots,x_n,y_1,\dots,y_n,t)$ will be denoted as $(w_1,\dots,w_{2n+1})$.
\end{comment}%%%%%%%%%%%%%%%%%%%%%%%%%%%%%%%%%%%%%%%%%%%%%%%%%%%%%%%%%%%%%%%%%%%%%%%%%%%%%%%%%%%%%%%%%%%%%%%%%%%%%%%%%%%

\begin{defin}%$\\$  %T^{*}(\mathbb{H}^n)=             %GC
\label{dual_basis}
Consider the dual space of $\mathfrak{h}$, $ {\prescript{}{}\bigwedge}^1 \mathfrak{h}$, which inherits an inner product from $\mathfrak{h}$. By duality, one can find a dual orthonormal basis of covector fields $\{\omega_1,\dots,\omega_{2n+1}\}$ in $ {\prescript{}{}\bigwedge}^1 \mathfrak{h}$ such that 
$$
% \omega_j ( W_k ) =\langle \omega_j , W_k \rangle = 
\langle \omega_j \vert W_k \rangle =
\delta_{jk}, \quad \text{for } j,k=1,\dots,2n+1,
$$
where $W_k$ is an element of the basis of $\mathfrak{h}$. % and the notation varies in the literature. 
Such covector fields are differential forms in the Heisenberg group.
\end{defin}

\noindent
%It turns out that 
The orthonormal basis of $ {\prescript{}{}\bigwedge}^1 \mathfrak{h}$ is given by  
$
\{dx_1,\dots,dx_n,dy_1,\dots,dy_n,\theta \},
$  
where $\theta$ is called \emph{contact form} and is defined as
$$
\theta :=dt - \frac{1}{2}  \sum_{j=1}^{n} (x_j d y_j-y_j d x_j).
$$

\begin{comment}%%%%%%%%%%%%%%%%%%%%%%%%%%%%%%%%%%%%%%%%%%%%%%%%%
\noindent
As for the vector fields, we can call all forms by the same name as:
$$
\begin{cases}
\theta_j &:= dx_j \quad \text{for } j=1,\dots,n,\\
\theta_{n+j} &:= dy_j \quad \text{for } j=1,\dots,n,\\
\theta_{2n+1}&:=\theta.
\end{cases}
$$
In particular $\theta_j$ is always the dual of $W_j$, for all $j=1,\dots,2n+1$.\\\\% For simplicity, we will refer to the covector also as differential forms.\\\\

\noindent
Note that one can introduce the Heisenberg group $\mathbb{H}^n$ with a different approach and define it as a \emph{contact manifold}. A contact manifold is a manifold with a contact structure, meaning that its algebra $\mathfrak{h}$ has a $1$-codimensional subspace $Q$ that can be written as the kernel of a non-degenerate $1$-form, which is then called \emph{contact form}.\\
The just-defined $\theta$ satisfy all these requirements and is indeed the contact form of the Heisenberg group, while $Q=\mathfrak{h}_1$. The non-degeneracy condition is $\theta \wedge d \theta \neq 0$, which is verified. 
\end{comment}%%%%%%%%%%%%%%%%%%%%%%%%%%%%%%%%%%%%%%%%%%%%%%%%%%%%%%%%%%%%%%%%%%%%%%%%%%%%%%%%%%%%%%%%%%%%%%%%%%%%%%%%%%%

\begin{defin} \label{kdim}   %2.2 FSSC
We define the sets of $k$-dimensional vector fields and differential forms, respectively, as:
\begin{align*}
\Omega_k \equiv {\prescript{}{}\bigwedge}_k \mathfrak{h} &:= \spn \{ W_{i_1} \wedge \dots \wedge W_{i_k} \}_{1\leq i_1 \leq \dots \leq i_k \leq 2n+1 },
\end{align*}
and
\begin{align*}
\Omega^k \equiv {\prescript{}{}\bigwedge}^k \mathfrak{h} &:= \spn \{ \theta_{i_1} \wedge \dots \wedge \theta_{i_k} \}_{1\leq i_1 \leq \dots \leq i_k \leq 2n+1 },
\end{align*}
where $W_{i_l}$'s are elements of the standard basis of $\mathfrak{h}$ and $\theta_{i_l}$'s are elements of the standard basis of $ {\prescript{}{}\bigwedge}^1 \mathfrak{h}$.\\ 
The same definitions can be given for $ \mathfrak{h}_1$ and produce the spaces $ {\prescript{}{}\bigwedge}_k \mathfrak{h}_1 $ and $ {\prescript{}{}\bigwedge}^k \mathfrak{h}_1 $.
\end{defin}

\begin{defin}
Consider a form $\omega \in  {\prescript{}{}\bigwedge}^k \mathfrak{h}$, with $k=1,\dots,2n+1$. We define $\omega^* \in  {\prescript{}{}\bigwedge}_k \mathfrak{h}$ so that
$$
\langle \omega^* , V   \rangle   =     \langle \omega \vert V   \rangle \quad \text{for all } V \in  {\prescript{}{}\bigwedge}_k \mathfrak{h}.
$$
\end{defin}

\noindent
Next we give the definition of Pansu differentiability for maps between Carnot groups $\mathbb{G}$ and $\mathbb{G}'$. After that, we state it in the special case of $\mathbb{G}=\mathbb{H}^n$ and $\mathbb{G}'=\mathbb{R}$.\\
A \emph{Carnot group} is a simply connected nilpotent Lie group and we call a function $h : (\mathbb{G},*,\delta) \to (\mathbb{G}',*',\delta')$ \emph{homogeneous} if $h(\delta_r(p))= \delta'_r \left ( h(p) \right )$ for all $r>0$.

\begin{defin}[see \cite{PANSU} and 2.10 in \cite{FSSC}]\label{dGGG}
Consider two Carnot groups $(\mathbb{G},*,\delta)$ and $(\mathbb{G}',*',\delta')$. A function $f: U \to \mathbb{G}'$, $U \subseteq \mathbb{G}$ open, is \emph{P-differentiable} at $p_0 \in U$ if there is a (unique) homogeneous Lie group %s
 homomorphism $d_H f_{p_0} : \mathbb{G} \to \mathbb{G}'$ such that
$$
d_H f_{p_0} (p) := \lim\limits_{r \to 0} \delta'_{\frac{1}{r}} \left ( f(p_0)^{-1} *' f(p_0* \delta_r (p) ) \right ),
$$
%$$
%d_H f_{p_0} (p) := \lim\limits_{r \to 0} \frac{  f \left (p_0* \delta_r (p) \right ) - f(p_0) }{r}
%$$
uniformly for $p$ in compact subsets of $U$.
\end{defin}

\begin{defin}\label{dHHH}
Consider a function $f: U \to \mathbb{R}$, $U \subseteq \mathbb{H}^n$ open. $f$ is \emph{P-differentiable} at $p_0 \in U$ if there is a (unique) homogeneous Lie group %s 
 homomorphism $d_H f_{p_0} : \mathbb{H}^n \to \mathbb{R}$ such that
%$$
%d_H f_{p_0} (p) := \lim\limits_{r \to 0} \frac{ f(p_0)^{-1} f(p_0* \delta_r (p) ) }{r}
%$$
$$
d_H f_{p_0} (p) := \lim\limits_{r \to 0} \frac{  f \left (p_0* \delta_r (p) \right ) - f(p_0) }{r},
$$
uniformly for $p$ in compact subsets of $U$.
\end{defin}

\noindent
Consider again a function $f:U \to \mathbb{H}^n$, $U\subseteq \mathbb{H}^n$ open, and interpret $\mathbb{H}^n = \mathbb{R}^{2n+1}$ and $f$  in components as $f=(f^1,\dots,f^{2n+1})$,  $f^j:U \to \mathbb{R}$, $j=1,\dots,2n+1$. A straightforward computation shows that, if $f$ is P-differentiable in the sense of Definition \ref{dGGG}, then $f^1,\dots,f^{2n}$ are P-differentiable in the sense of Definition \ref{dHHH}.

\begin{defin}[see 2.11 in \cite{FSSC}]\label{veryfirstnabla}
Consider a function $f$ P-differentiable  at $p \in U$, $f:U \to \mathbb{R}$, $U\subseteq \mathbb{H}^n$ open. %, a P-differentiable function at $p \in U$. 
The \emph{Heisenberg gradient} or \emph{horizontal gradient} of $f$ at $p$ is defined as
$$
\nabla_\mathbb{H} f(p) := \left ( d_H f_p \right )^* \in \mathfrak{h}_1,
$$
or, equivalently,
$$
\nabla_\mathbb{H} f(p) = \sum_{j=1}^{n} \left [  (X_j f)(p) X_j  + (Y_j f)(p) Y_j  \right ].
$$
\end{defin}

%NOTE: that $*$ in the definition is, I think, a dual. 

\begin{no}[see 2.12 in \cite{FSSC}]\label{CH1}
Consider $ U \subseteq \mathbb{H}^n$ open, we say that $C_{\mathbb{H}}^1 (U, \mathbb{R})$ is the vector space of continuous functions $f:U \to \mathbb{R} $  such that $\nabla_\mathbb{H} f$ is continuous in $U$ or, equivalently, such that the P-differential $d_H f$ is continuous.
\end{no}

\begin{comment}%%%%%%%%%%%%%%%%%%%%%%%%%%%%%%%%%%%%%%%%%%%%%%%%%
\begin{no}[see 2.12 in \cite{FSSC}]\label{CH1}
Sets of differentiable functions can be defined with respect to the P-differentiability. Consider $ U \subseteq \mathbb{H}^n$ open, then
\begin{itemize}
\item
$C_{\mathbb{H}}^1 (U, \mathbb{H}^n)$ is the vector space of continuous functions $f:U \to \mathbb{H}^n $  such that the P-differential $d_H f$ is continuous.
\item
$C_{\mathbb{H}}^1 (U, \mathbb{R})$ is the vector space of continuous functions $f:U \to \mathbb{R} $  such that $\nabla_\mathbb{H} f$ is continuous in $U$ or, equivalently, such that the P-differential $d_H f$ is continuous.
\item
$C_{\mathbb{H}}^m (U, \mathbb{R})$ is the vector space of continuous functions $f:U \to \mathbb{R}$ such that the derivatives of the kind $W_{i_1} \dots W_{i_m}f$ are continuous in $U$, where $W_{i_h}$ is any $X_j$ or $Y_j$.
\item
%$\left [ C_{\mathbb{H}}^m (U,\mathbb{R}) \right ]^k$ is the set of $k$-tuples $f=\left (f^1,\dots, f^k \right)$ such that $f^i \in C_{\mathbb{H}}^m (U,\mathbb{R})$ for each $i=1 ,\dots , k$.
$ C_{\mathbb{H}}^m (U,\mathbb{R}^k) $ is the set of $k$-tuples $f=\left (f^1,\dots, f^k \right)$ such that $f^i \in C_{\mathbb{H}}^m (U,\mathbb{R})$ for each $i=1 ,\dots , k$.
\end{itemize}
\end{no}

\noindent
Given the notation above we have:
$$
\diamondinclusion{C^3 (U, \mathbb{R})}{C^2 (U, \mathbb{R})\\C_{\mathbb{H}}^3 (U, \mathbb{R})}{ C_{\mathbb{H}}^2 (U, \mathbb{R}) }
   \subsetneq C^1 (U, \mathbb{R}) \subsetneq  C_{\mathbb{H}}^1 (U, \mathbb{R}).
$$
\end{comment}%%%%%%%%%%%%%%%%%%%%%%%%%%%%%%%%%%%%%%%%%%%%%%%%%%%%%%%%%%%%%%%%%%%%%%%%%%%%%%%%%%%%%%%%%%%%%%%%%%%%%%%%%%%

\noindent
To conclude this part, we define the Hodge operator which, given a vector field, returns a second one of dual dimension  %with the property to be
and orthogonal to the first.

\begin{defin}[see 2.3 in \cite{FSSC} or 1.7.8 in \cite{FED}]\label{hodge}
Consider $1 \leq k \leq 2n$. The \emph{Hodge operator} is the linear isomorphism
%\begin{align*}
%*: {\prescript{}{}\bigwedge}_k \mathfrak{h} &\rightarrow {\prescript{}{}\bigwedge}_{2n+1-k} \mathfrak{h} \\
%v = \sum_I v_I V_I &\mapsto *v = \sum_I v_I (*V_I)
%\end{align*}
\begin{align*}
*: {\prescript{}{}\bigwedge}_k \mathfrak{h} &\rightarrow {\prescript{}{}\bigwedge}_{2n+1-k} \mathfrak{h} ,\\
\sum_I v_I V_I &\mapsto  \sum_I v_I (*V_I),
\end{align*}
where 
$
*V_I:=(-1)^{\sigma(I) }V_{I^*},
$ 
and, for $1 \leq  i_1 \leq \cdots \leq i_k \leq 2n+1$,
\begin{itemize}
\item $I=\{ i_1,\cdots,i_k \}$,
\item $V_I= V_{i_1} \wedge \cdots \wedge V_{i_k} $,
\item $I^*=\{ i_1^*,\dots,i_{2n+1-k}^* \}=\{1, \cdots, 2n+1\} \smallsetminus I \quad $  and
\item $\sigma(I)$ is the number of couples $(i_h,i_l^*)$ with $i_h > i_l^*$.
\end{itemize}
%with $I=\{ i_1,\cdots,i_k \}$,  $V_I= V_{i_1} \wedge \cdots \wedge V_{i_k} $;\\
% and $I^*=\{ i_1^*,\dots,i_{2n+1-k}^* \}=\{1, \cdots, 2n+1\} \smallsetminus I$.\\
%$\sigma(I)$ is the number of couples $(i_h,i_l^*)$ with $i_h > i_l^*$.
\end{defin}

%-----------------------------------------------------------------------------------------------------------------------------------------------------
%-----------------------------------------------------------------------------------------------------------------------------------------------------
%-----------------------------------------------------------------------------------------------------------------------------------------------------

\subsection{Distances and Dimensions on $\mathbb{H}^n$}

On the Heisenberg group  $\mathbb{H}^n$ we can define different equivalent distances. Then we can look at its topology and different dimensions.

\begin{defin}%$\\$      GC              
\label{norm}
We define the \emph{Kor\'anyi} distance on $\mathbb{H}^n$ by setting, for $p,q \in \mathbb{H}^n$,
%$$
%d_{\mathbb{H}} (p,q) :=   \left | q^{-1}*p \right |_{\mathbb{H}}
%$$
$$
d_{\mathbb{H}} (p,q) :=  \norm{ q^{-1}*p }_{\mathbb{H}},
$$
where $ \norm{ \cdot }_{\mathbb{H}}$ is the \emph{Kor\'anyi}  norm
$$
\norm{(x,y,t)}_{\mathbb{H}}:=\left (  |(x,y)|^4+16t^2  \right )^{\frac{1}{4}},
$$
with $(x,y,t) \in \mathbb{R}^{2n} \times  \mathbb{R} $ and $| \cdot |$ being the Euclidean norm.
\end{defin}

\noindent
The Kor\'anyi distance is left invariant, i.e,
$$
d_{\mathbb{H}} (p*q,p*q')=d_{\mathbb{H}} (q,q'),  \quad  p,q,q' \in \mathbb{H}^n,
$$
and homogeneous of degree $1$ with respect to $\delta_r$, i.e,
$$
d_\mathbb{H} \left ( \delta_r (p), \delta_r (q)  \right ) = r d_{\mathbb{H}} (p,q) ,  \quad  p,q \in \mathbb{H}^n, \quad r>0.
$$

\noindent
Furthermore, the \emph{Kor\'anyi} distance is equivalent to the \emph{Carnot--Carath\'eodory} distance $d_{cc}$, which is defined as the infimum of all lengths of curves between two points whose tangent vector fields are horizontal.
% which measures the distance between any two points along shortest curves whose tangent vector fields are horizontal.

\begin{comment}%%%%%%%%%%%%%%%%%%%%%%%%%%%%%%%%%%%%%%%%%%%%%%%%%
\noindent
The topology induced by the Kor\'anyi metric is equivalent to the Euclidean topology on $\mathbb{R}^{2n+1}$. The Heisenberg group $\mathbb{H}^n$ becomes, then, a locally compact topological group. As such, it has the \emph{right invariant} and the \emph{left invariant Haar measure}. The ordinary Lebesgue measure on $\mathbb{R}^{2n+1}$ is invariant under both left and right translations on $\mathbb{H}^n$; in other words, the Lebesgue measure is both a left and right invariant Haar measure on $\mathbb{H}^n$.

\begin{rem}      %GC
\label{dimension}
It is easy to see that, denoting  the ball of radius $r>0$ as
$$
B_\mathbb{H}(0,r):=\{ (x,y,t) \in \mathbb{H}^n ; \ \norm{(x,y,t)}_\mathbb{H} <r \},
$$
a change of variables gives
$$
|B_\mathbb{H}(0,r)|=
 r^{2n+2}|B_\mathbb{H}(0,1)|.
$$
Thus $2n+2$ is the Hausdorff dimension of $ \left (\mathbb{H}^n, d_\mathbb{H} \right )$, which coincides with its homogeneous dimension.
\end{rem}
\end{comment}%%%%%%%%%%%%%%%%%%%%%%%%%%%%%%%%%%%%%%%%%%%%%%%%%%%%%%%%%%%%%%%%%%%%%%%%%%%%%%%%%%%%%%%%%%%%%%%%%%%%%%%%%%%

\begin{no}\label{CCK}
Consider a surface $S \subseteq \mathbb{H}^n$. We denote the Hausdorff dimension of $S$ with respect to the Euclidean distance as 
$$
\dim_{\mathcal{H}_{E}} S,
$$ 
while its Hausdorff dimension with respect to the Carnot--Carath\'eodory and Kor\'anyi distances as
$$
\dim_{\mathcal{H}_{cc}} S= \dim_{\mathcal{H}_{\mathbb{H}}} S.
$$
\end{no}

%-----------------------------------------------------------------------------------------------------------------------------------------------------
%-----------------------------------------------------------------------------------------------------------------------------------------------------
%-----------------------------------------------------------------------------------------------------------------------------------------------------

\section{Orientability} \label{orient4}
In this section we first discuss the notion of $\mathbb{H}$-regularity for low dimension and low codimension surfaces in the Heisenberg group; this work was inspired by the research of Bruno Franchi, Raul Serapioni and Francesco Serra Cassano \cite{FSSC}. Then we analyse the behaviour of a M{\"o}bius Strip in $\mathbb{H}^1$ and find a  $1$-codimensional $C^1$-Euclidean subset with no characteristic points and non-orientable in the Euclidean sense. This subset is, in particular, a $1$-codimensional $\mathbb{H}$-regular surface non-orientable in the Euclidean sense. \\
Next, we introduce and characterise the notion of orientability in the Heisenberg sense ($\mathbb{H}$-orientability), which, as one would expect, is invariant under left translations and anisotropic dilations for $\mathbb{H}$-regular $1$-codimensional surfaces. Lastly, we show that, for regular enough surfaces, $\mathbb{H}$-orientability implies Euclidean-orientability. As a consequence, we conclude that non-$\mathbb{H}$-orientable $\mathbb{H}$-regular surfaces exist in $\mathbb{H}^1$.

\subsection{$\mathbb{H}$-regularity in $\mathbb{H}^n$}\label{subsec:Hreg}

We state here the definitions of $\mathbb{H}$-regularity for low dimension and low codimension. Then we proceed to define normal and tangent vector fields and characteristic points.

\begin{defin}[see 3.1 in \cite{FSSC}]
Consider $1\leq k \leq n$. A subset $S \subseteq \mathbb{H}^n$ is a $\mathbb{H}$-\emph{regular} $k$-\emph{dimensional surface} if for all $p \in S$   there exists  a neighbourhood $U$ of $p$,   % \in \mathcal{U}_p$
 an open set $ V \subseteq \mathbb{R}^k$ and a function $\varphi : V \to U$, $ \varphi \in C_{\mathbb{H}}^1(V,U) $  injective with $d_H \varphi $ injective, such that $ S \cap U = \varphi (V)$.
\end{defin}

\begin{defin}[see 3.2 in \cite{FSSC}]\label{Hreg}
Consider $1\leq k \leq n$. A subset $S \subseteq \mathbb{H}^n$ is a $\mathbb{H}$-\emph{regular} $k$-\emph{codimensional surface} if for all $ p \in S $ there exists a neighbourhood  $U$ of $p$   % \in \mathcal{U}_p$
 and a function  $ f : U \to \mathbb{R}^k$, $ f \in C_{\mathbb{H}}^1(U,\mathbb{R}^k)$, such that  $  {\nabla_\mathbb{H} f_1} \wedge \dots \wedge {\nabla_\mathbb{H} f_k}   \neq 0 $ on   $ U $ and  $  S \cap U = \{ f=0 \} $.
\end{defin}

\noindent
We will almost always work with the codimensional definition, that is, the surfaces of higher dimension.  
If a surface is $\mathbb{H}$-regular, it is natural to associate to it, locally, a normal and a tanget vector field:

\begin{defin}\label{def:hornormal}
Consider a $\mathbb{H}$-regular $k$-codimensional surface $S$ and $p \in S$. Then the \emph{(horizontal) normal $k$-vector field} $n_{\mathbb{H},p}$ is defined as
$$
n_{\mathbb{H},p} := \frac{ {\nabla_\mathbb{H} f_1}_p \wedge \dots \wedge {\nabla_\mathbb{H} f_k}_p   }{ \vert {\nabla_\mathbb{H} f_1}_p \wedge \dots \wedge {\nabla_\mathbb{H} f_k}_p \vert }    \in {\prescript{}{}\bigwedge}_{k,p} \mathfrak{h}_1 .
$$
In a natural way, the \emph{tangent $(2n+1-k)$-vector field} $t_{\mathbb{H},p}$ is defined as the dual of $n_{\mathbb{H},p}$:
$$
t_{\mathbb{H},p} := * n_{\mathbb{H},p} \in {\prescript{}{}\bigwedge}_{2n+1-k,p} \mathfrak{h},
$$
where $*$ is the Hodge operator of Definition \ref{hodge}. If $n_{\mathbb{H},p} $ and $t_{\mathbb{H},p} $ can be defined globally, then they are denoted $n_{\mathbb{H}} $ and $t_{\mathbb{H}} $.
\end{defin}

\noindent
When considering the regularity of a surface in the Euclidean sense, as opposed to the $\mathbb{H}$-regularity, we say $C^k$\emph{-regular in the Euclidean sense} or $C^k$\emph{-Euclidean} for short. This obviously means that we are looking at $S\subseteq \mathbb{H}^n$ as a subset of $\mathbb{R}^{2n+1}$.

%NOTE: see paper by Balogh-Rickly-Serra Cassano (Laura's slides)

\begin{lem}\label{quickcomputation}
Consider a $C^1$-Euclidean surface   $S$  in $\mathbb{H}^n$. Then $\dim_{\mathcal{H}_{cc}} S=Q-1=2n+1$ if and only if $\dim_{\mathcal{H}_{E}} S =2n$.
\end{lem}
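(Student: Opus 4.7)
The proof rests on the standard bi-Hölder comparison between Euclidean and Kor\'anyi distances on $\mathbb{H}^n$: on every bounded $K\subseteq\mathbb{H}^n$ there exist $c_1,c_2>0$ with
$$c_1\,|p-q|_E \,\leq\, d_\mathbb{H}(p,q) \,\leq\, c_2\,|p-q|_E^{1/2} \qquad (p,q\in K).$$
Transported to Hausdorff dimensions this yields $\dim_{\mathcal{H}_E} S \leq \dim_{\mathcal{H}_{\mathbb{H}}} S \leq 2\dim_{\mathcal{H}_E} S$, which is too coarse to conclude by itself.

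The refinement I would use, valid for any $C^1$-Euclidean submanifold $S$ of Euclidean dimension $k\leq 2n$, is that $\dim_{\mathcal{H}_{\mathbb{H}}} S \in \{k,\,k+1\}$, with the larger value attained exactly where the Euclidean tangent plane $T_pS$ is not contained in the horizontal subspace $\mathfrak{h}_{1,p}$; for $k=2n+1$ the set $S$ is Euclidean-open and $\dim_{\mathcal{H}_{\mathbb{H}}} S = Q = 2n+2$. I would prove this by a local covering argument in a $C^1$ graph chart of $S$: at a point where $T_pS\subseteq\mathfrak{h}_{1,p}$ the Euclidean and Kor\'anyi diameters of small patches on $S$ are comparable, while at a non-horizontal point the extra transverse direction has Kor\'anyi length of order $\sqrt{\varepsilon}$ for a Euclidean displacement $\varepsilon$, contributing $2$ rather than $1$ to the Hausdorff dimension count on that coordinate.

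With this dimension-gap fact at hand both implications follow. Forward: if $\dim_{\mathcal{H}_E} S = 2n$, then $S$ is a $C^1$ hypersurface; since $\mathfrak{h}_1$ is not involutive (because $[X_j,Y_j]=T\notin\mathfrak{h}_1$) the characteristic set $\{p\in S : T_pS\subseteq\mathfrak{h}_{1,p}\}$ cannot contain a non-empty relatively open subset of $S$, so the refinement gives the ``gain'' on a dense open part and $\dim_{\mathcal{H}_{\mathbb{H}}} S = 2n+1$. Reverse: if $\dim_{\mathcal{H}_{\mathbb{H}}} S = 2n+1$, then $k\leq 2n-1$ is ruled out (it would give $\dim_{\mathcal{H}_{\mathbb{H}}} S\leq 2n$) and $k=2n+1$ is ruled out (giving $\dim_{\mathcal{H}_{\mathbb{H}}} S = 2n+2$), leaving $k=2n$, i.e.\ $\dim_{\mathcal{H}_E} S = 2n$. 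The main technical obstacle is really the dimension-gap refinement in the middle paragraph; the bi-Hölder step and the tangent-space bookkeeping are both routine.
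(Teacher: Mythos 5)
Your overall bookkeeping is fine and runs parallel to the paper's: the paper also reduces everything to a dimension-comparison statement, namely the Balogh--Tyson--Warhurst bounds $\max\{k,2k-2n\}\leq\dim_{\mathcal{H}_{cc}}S\leq\min\{2k,k+1\}$ (cited from \cite{BTW}, Theorems 2.4--2.6) for one direction, and the known fact that a $C^1$-Euclidean hypersurface has $\dim_{\mathcal{H}_{cc}}S=Q-1$ (cited from p.~64 of \cite{BAL}, or \cite{GROMOV}) for the other. Your deductions from your ``dimension-gap refinement'' to the two implications of the lemma are correct, and your use of non-involutivity of $\mathfrak{h}_1$ to rule out an open characteristic set is a legitimate replacement for the paper's direct citation of the hypersurface result.

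The genuine gap is that the refinement itself --- $\dim_{\mathcal{H}_{\mathbb{H}}}S\in\{k,k+1\}$ with the value $k+1$ attained exactly on the non-horizontal part --- is the entire content of the lemma, and the ``local covering argument'' you sketch cannot deliver it. Coverings only produce \emph{upper} bounds on Hausdorff measure: your observation that a Euclidean displacement $\varepsilon$ transverse to $\mathfrak{h}_{1,p}$ has Kor\'anyi size of order $\sqrt{\varepsilon}$ shows that small patches of $S$ can be covered efficiently enough to give $\dim_{\mathcal{H}_{\mathbb{H}}}S\leq k+1$, but the step you actually need for the forward implication is the \emph{lower} bound $\dim_{\mathcal{H}_{\mathbb{H}}}S\geq 2n+1$ near a non-characteristic point, i.e.\ that no cleverer covering does better. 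That requires a mass-distribution/Frostman-type argument or the positivity of the $(2n+1)$-dimensional CC spherical measure on non-characteristic hypersurface patches (comparability with the perimeter measure), which is precisely what the results of Balogh, Gromov and Balogh--Tyson--Warhurst supply and what the paper imports rather than proves. As stated, your refinement is also stronger than what is known to be needed (the ``exactly where'' clause for arbitrary $k\leq 2n$ and merely $C^1$ regularity is a nontrivial assertion in its own right); for this lemma you only need the hypersurface case plus the coarse bounds, so either prove the lower bound for that case or cite it.
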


\noindent
%Hence, from now on we may consider a $C^1$-Euclidean surface $S$ and ask it to be $1$-codimensional without further specifications.
Hence, from now on, we may consider a $C^1$-Euclidean $1$-codimensional surface without further specifications on the dimension.

\begin{proof}%[Proof of Lemma \ref{quickcomputation}]
Consider first $\dim_{\mathcal{H}_{cc}} S=2n+1$. The Hausdorff dimension of $S$ with respect to the Euclidean distance is equal to the dimension of the tangent plane, which is well defined everywhere by hypothesis; hence such dimension is an integer. By theorems $2.4$-$2.6$ in \cite{BTW} or by \cite{BRSC} (for $\mathbb{H}^1$ only) and with $k=\dim_{\mathcal{H}_{E}} S$, one has that:
$$
\max \{  k, 2k-2n \} \leq \dim_{\mathcal{H}_{cc}} S \leq \min \{ 2k, k+1 \} .
$$
The second inequality says that 
$$
2n+1 = \dim_{\mathcal{H}_{cc}} S \leq k+1,
$$
meaning $2n \leq k$. Then the only possible cases are $k=2n$ and $k=2n+1$.\\
%  If $2k\leq k+1$ (so $k\leq 1$), then $\min \{ 2k, k+1 \}=2k\leq 2$  and
%$$
%\dim_{\mathcal{H}_{cc}} S= 2n+1 \leq 2 ,
%$$
%which is impossible because $2n+1\geq 3$. So $\min \{ 2k, k+1 \} = k+1$.\\\\
Next, 
 if $k <  2k-2n$ (so $k > 2n$), on one side the only possibility becomes $k=2n+1$. On the other side, $k$ is also strictly less than $\max \{  k, 2k-2n \} =2k-2n$, which must be less than or equal to $\dim_{\mathcal{H}_{cc}} S$: 
$$
2n+1 = k < \max \{  k, 2k-2n \} \leq \dim_{\mathcal{H}_{cc}} S =2n+1,
$$
which is impossible. Then $2k-2n\leq k$, meaning $k\leq 2n$.  
So the only possibility is $k=2n$.\\
On the other hand, if we consider a $C^1$-Euclidean surface $S \subseteq \mathbb{R}^{2n+1}$ with $\dim_{\mathcal{H}_{E}} S =2n$ (an hypersurface in the Euclidean sense), then it follows (see page 64 in \cite{BAL} or by \cite{GROMOV}) that $\dim_{\mathcal{H}_{cc}} S=2n+1$.
\end{proof}

\begin{defin}
Consider a surface $S \subseteq \mathbb{H}^n$ and denote $ T_p S $ the space of  vectors tangent to $S$ at the point $p$. Define the \emph{characteristic set} $C(S)$ of $S$ as 
$$
%C(S):= \left \{   p \in S ; \ T_p S = \mathfrak{h}_{1,p}   \right \}.
C(S):= \left \{   p \in S ; \ T_p S \subseteq \mathfrak{h}_{1,p}   \right \}.
$$
This says that a point $p \in C(S)$ if and only if $n_{\mathbb{H},p}=0$. For the $k$-codimensional case, this also means that it is not possible to find a map $f$ such as in Definition \ref{Hreg}. %Viceversa, $p \notin C(S)$ if $n_{\mathbb{H},p} \neq 0$.
\end{defin}
%THE DEFINITION OF CHARACTERISTIC POINTS FOR MAGNANI IS AT 2.10. HO FATTO LA PROVA E CORRISPONDE.

\noindent
By 1.1 in \cite{BAL} and 2.16 in \cite{MAG} we can infer that the set of characteristic points of a $k$-codimensional surface $S \subseteq \mathbb{H}^n$ has always measure zero:
$$
\mathcal{H}_{cc}^{2n+2-k} \left (  C(S)   \right ) =0.
$$
Furthermore, from page $195$ in \cite{FSSC}, we can say that a $C^1$-Euclidean surface $S$ with $C(S)= \varnothing$ is a $\mathbb{H}$-regular surface in $\mathbb{H}^n$.

%-----------------------------------------------------------------------------------------------------------------------------------------------------
%-----------------------------------------------------------------------------------------------------------------------------------------------------
%-----------------------------------------------------------------------------------------------------------------------------------------------------

\subsection{The M{\"o}bius Strip in $\mathbb{H}^1$}\label{subsec:Mobius}

In this subsection we show that, at least when $n=1$, there exist $1$-codimensional $C^1$-Euclidean surfaces with no characteristic points that are non-orientable in the Euclidean sense. This implies that there exist $1$-codimensional $\mathbb{H}$-regular surfaces which are non-orientable in the Euclidean sense. \\

\noindent
We prove this by considering a M\"obius strip, the most classical non-orientable object in the Euclidean sense. We will define only later (Definition \ref{Eorientable}) the notion of orientability in the Euclidean sense but here we only need the knowledge that the M\"obius strip is not Euclidean-orientable.

\noindent
Let $\mathcal{M}$ be any M\"obius strip. Is $C(\mathcal{M}) = \varnothing$? Or, if not, is there a surface $\widetilde{\mathcal{M}}  \subseteq \mathcal{M}$, $\widetilde{\mathcal{M}}$ still non-orientable in the Euclidean sense, such that  $C( \widetilde{\mathcal{M}} ) = \varnothing$? We will attempt an answer by considering one specific parametrisation of the M\"obius strip.

\begin{comment}
, we show that there is only at most one characteristic point $\tilde p$. Therefore the surface
$$
\widetilde{\mathcal{M}}   :=\mathcal{M}  \ \setminus \  U_{\tilde p},
$$
where $U_{\tilde p}$ is a neighbourhood of $\tilde p$ taken with smooth boundary, is indeed $C^1$-Euclidean with $C(\widetilde{\mathcal{M}}) = \varnothing$ and non-orientable in the Euclidean sense.  
%\begin{sol}
%\end{sol}
In particular, to prove the existence of $\widetilde{\mathcal{M}}$, some steps are needed: 1. parametrize the M\"obius strip $\mathcal{M}$ as $\gamma(r,s)$, 2. write the two tangent vectors $\vec\gamma_r$ and $\vec\gamma_s$ in Heisenberg coordinates, 3. compute the components of the normal vector field $\vec N=\vec\gamma_r \times_H \vec\gamma_s$ and 4. compute on which points both the first and second components are not zero.%For the full calculations, see Appendix \ref{appMo}.
\end{comment}

\noindent
Consider the fixed numbers $R  \in \mathbb{R}^+ $ and $w \in \mathbb{R}^+$ so that $w<R$. Then consider the map
%$$
%\gamma : [0,2 \pi ) \times [-w,w] \to  \mathbb{R}^3  % \ \ \gamma (r,s)=(x(r,s), y(r,s), t(r,s)),
%$$
%defined as follows
\begin{align*}
\gamma :& [0,2 \pi ) \times [-w,w] \to  \mathbb{R}^3  \\% \ \ \gamma (r,s)=(x(r,s), y(r,s), t(r,s)),
\gamma (r,s):&=(x(r,s), y(r,s), t(r,s))\\
&= \left (  \left [ R+s \cos \left  ( \frac{r}{2} \right ) \right ] \cos r  , \  \left [R+s \cos \left ( \frac{r}{2} \right ) \right ] \sin r , \  s \sin \left  ( \frac{r}{2} \right ) \right  ).
\end{align*}
This is a parametrisation of a M{\"o}bius strip of half-width $w$ with midcircle of radius $R$  in $\mathbb{R}^3$. We can denote then $\mathcal{M} := \gamma \left  ( [0,2 \pi ) \times [-w,w] \right  ) \subseteq  \mathbb{R}^3$.

\begin{prop}\label{Mobius}
Consider the M\"obius strip $\mathcal{M}$ parametrised by the curve $\gamma$. Then $\mathcal{M}$ contains at most one characteristic point $\tilde p$ and so there exists a $1$-codimensional  $C^1$-Euclidean surface $\widetilde{\mathcal{M}}  \subseteq \mathcal{M}$ such that $\tilde p  \notin \widetilde{\mathcal{M}}$, $\widetilde{\mathcal{M}}$ still non-orientable in the Euclidean sense and  $C( \widetilde{\mathcal{M}} ) = \varnothing$.
\end{prop}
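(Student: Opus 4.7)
The plan is to reduce the characteristic condition on $\mathcal{M}$ to two scalar equations in $(r,s)$ and show that their common zero set contains at most one point, after which we remove a small disk (if necessary) and verify that non\textendash orientability survives. First, I would compute $\gamma_r$ and $\gamma_s$ in the ambient Euclidean coordinates, and then rewrite them in the left\textendash invariant basis using $\partial_{x_1}=X_1+\tfrac12 y_1T$ and $\partial_{y_1}=Y_1-\tfrac12 x_1T$, so that
\[
\gamma_r = x_rX_1+y_rY_1+\bigl(t_r+\tfrac12(yx_r-xy_r)\bigr)T,
\]
and similarly for $\gamma_s$. The horizontal parts are automatic; what matters is whether the two $T$\textendash components vanish simultaneously, as this is precisely the condition $T_pS\subseteq \mathfrak{h}_{1,p}$ defining a characteristic point.

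Setting $\rho(r,s):=R+s\cos(r/2)$ so that $x=\rho\cos r$, $y=\rho\sin r$, $t=s\sin(r/2)$, a short direct calculation (the cross terms in $\sin r\cos r$ cancel) gives the identities
\[
yx_r-xy_r=-\rho^{2}, \qquad yx_s-xy_s=0.
\]
Hence the two $T$\textendash components simplify to $\tfrac12\bigl(s\cos(r/2)-\rho^{2}\bigr)$ and $\sin(r/2)$. Since $r\in[0,2\pi)$, the second equation forces $r=0$; substituting into the first produces the quadratic $(R+s)^{2}=s$, i.e.\ $s^{2}+(2R-1)s+R^{2}=0$. Its discriminant is $1-4R$, so real solutions exist only for $R\leq\tfrac14$, and a quick inspection of the two candidate roots against the constraints $|s|\leq w$ and $w<R$ shows that at most one can lie in the admissible range (the $+$\textendash branch always exceeds $R+w$). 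This yields the claimed bound of at most one characteristic point $\tilde p$.

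To finish, if $C(\mathcal{M})=\varnothing$ we simply set $\widetilde{\mathcal{M}}=\mathcal{M}$. Otherwise, let $U_{\tilde p}$ be a small open disc in $\mathcal{M}$ around $\tilde p$ (chosen with smooth boundary, and small enough to be disjoint from the central circle $\{s=0\}$, which is possible because $\tilde p$ corresponds to $s=\frac{1-\sqrt{1-4R}}{2}-R>0$) and set $\widetilde{\mathcal{M}}:=\mathcal{M}\setminus \overline{U_{\tilde p}}$. By construction $\widetilde{\mathcal{M}}$ is still a $C^{1}$\textendash Euclidean $1$\textendash codimensional surface, and $C(\widetilde{\mathcal{M}})=\varnothing$ by the preceding step. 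The main remaining point is non\textendash orientability: I would argue that a sufficiently thin tubular neighbourhood of the core circle $\{s=0\}$ lies in $\widetilde{\mathcal{M}}$ and is itself a Möbius strip, so it carries an orientation\textendash reversing loop whose class obstructs any continuous choice of Euclidean normal on $\widetilde{\mathcal{M}}$. Finally, the cited result from page 195 of \cite{FSSC} promotes $\widetilde{\mathcal{M}}$ to an $\mathbb{H}$\textendash regular surface.

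The main obstacles are the pleasant but easy\textendash to\textendash botch cancellations that produce $yx_r-xy_r=-\rho^{2}$ and $yx_s-xy_s=0$, together with the case analysis of the roots of $(R+s)^{2}=s$ against the hypothesis $0<w<R$; once these are in hand, the topological claim about non\textendash orientability is essentially standard.
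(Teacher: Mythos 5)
Your proof is correct, and its key computational step is genuinely different from the paper's. The paper also writes $\vec\gamma_r,\vec\gamma_s$ in the left-invariant basis, but then forms the full cross product $\vec N=\vec\gamma_r\times_{\mathbb H}\vec\gamma_s$ and detects characteristic points via the vanishing of the two \emph{horizontal} components $\vec N_1=\vec N_2=0$; this forces an analysis of a complicated polynomial in $z=\cos(r/2)$ and $s$, a two-branch description of the zero set of $\vec N_1$, and a further elimination step outsourced to E.6--E.7 of the cited thesis. You instead use the equivalent condition that both spanning tangent vectors be horizontal, i.e.\ that the two $T$-components vanish --- equivalent because the nonzero normal $\vec N$ is proportional to $T$ exactly when $\vec\gamma_r,\vec\gamma_s\perp T$. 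Since the $T$-component of $\vec\gamma_s$ is $\sin(r/2)$, this kills every $r\neq 0$ at once and reduces the whole problem to the single quadratic $s^{2}+(2R-1)s+R^{2}=0$, which is precisely the quadratic the paper arrives at on the branch $r=0$; your identities $yx_r-xy_r=-\rho^{2}$ and $yx_s-xy_s=0$, and the resulting $T$-components $\tfrac12(s\cos(r/2)-\rho^{2})$ and $\sin(r/2)$, agree with the paper's Step 2 lemma, so the computation checks out. Your handling of the excision is if anything more careful than the paper's (you verify $s\neq 0$ at $\tilde p$ so that a thin M\"obius band around the core circle survives). One small repair: your reason for discarding the root $s_+$ (``always exceeds $R+w$'') is not literally true --- for $R=0.24$, $w=0.2$ one gets $s_+=0.36<R+w=0.44$ --- but the inequality you actually need, $s_+>w$, always holds, since $s_+\geq\tfrac{1-2R}{2}\geq\tfrac14>w$ whenever the discriminant $1-4R$ is nonnegative and $w<R\leq\tfrac14$. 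With that one line fixed, the argument is complete and noticeably leaner than the paper's.
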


\noindent
This  says, by our discussion at the end of Subsection \ref{subsec:Hreg}, that $\widetilde{\mathcal{M}}$ is a $\mathbb{H}$-regular surface and is non-orientable in the Euclidean sense. The proof will follow after some lemmas (which are proved in E.1-E.5 in \cite{GClicentiate}).

\begin{lem}[Step 1]
Consider the parametrisation $\gamma$. The two tangent vector fields of $\gamma$, in the basis $\{\partial_x, \partial_y, \partial_t \}$, are
\begin{align*}
\vec\gamma_r (r,s) = & \bigg (     - \frac{s}{2} \sin \left  ( \frac{r}{2} \right )     \cos r -  \left [ R+s \cos \left  ( \frac{r}{2} \right ) \right ]     \sin r     , \\ 
& - \frac{s}{2} \sin \left  ( \frac{r}{2} \right )     \sin r +  \left [ R+s \cos \left  ( \frac{r}{2} \right ) \right ]   \cos r,  \     \frac{s}{2} \cos \left  ( \frac{r}{2} \right )    \bigg  ),\\
\text{and}\hspace{0.8cm} &\\
\vec\gamma_s (r,s) =& \left (   \cos \left  ( \frac{r}{2} \right )  \cos r   , \    \cos \left ( \frac{r}{2} \right )  \sin r  , \    \sin \left  ( \frac{r}{2} \right ) \right  ).
\end{align*}
\end{lem}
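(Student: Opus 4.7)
The plan is to proceed by direct differentiation of each component of the parametrisation
\[
\gamma(r,s) = \bigl(x(r,s),\, y(r,s),\, t(r,s)\bigr),
\]
viewed as a map into $\mathbb{R}^3$ with basis $\{\partial_x, \partial_y, \partial_t\}$. Since $\vec\gamma_r = \partial_r \gamma$ and $\vec\gamma_s = \partial_s \gamma$, this is a routine application of the product and chain rules from single-variable calculus; no Heisenberg structure enters at this stage, because the result is stated in the Euclidean basis.

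First I would compute $\vec\gamma_s$, which is the easier of the two because the variable $s$ appears linearly in each component. Differentiating the first component $\bigl[R + s\cos(r/2)\bigr]\cos r$ with respect to $s$ yields $\cos(r/2)\cos r$; the second component gives $\cos(r/2)\sin r$; the third, being $s\sin(r/2)$, yields $\sin(r/2)$. This matches the claimed expression for $\vec\gamma_s(r,s)$ directly.

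Next I would compute $\vec\gamma_r$, where the product rule has to be applied to each of the first two components. For the $x$-component, writing $A(r,s) := R + s\cos(r/2)$, we have $x = A\cos r$, so
\[
\partial_r x = (\partial_r A)\cos r - A\sin r = -\tfrac{s}{2}\sin(r/2)\cos r - \bigl[R + s\cos(r/2)\bigr]\sin r,
\]
using $\partial_r A = -\tfrac{s}{2}\sin(r/2)$ by the chain rule. Analogously for the $y$-component, $y = A\sin r$ gives
\[
\partial_r y = -\tfrac{s}{2}\sin(r/2)\sin r + \bigl[R + s\cos(r/2)\bigr]\cos r.
\]
For the $t$-component, $t = s\sin(r/2)$ differentiates to $\tfrac{s}{2}\cos(r/2)$. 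These match the stated formula for $\vec\gamma_r(r,s)$.

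There is no real obstacle here: the lemma is a bookkeeping computation, and the only thing to watch is the factor of $1/2$ coming from the inner derivative of $\cos(r/2)$ and $\sin(r/2)$, together with the correct signs from $\partial_r \cos r = -\sin r$. The nontrivial content of Proposition~\ref{Mobius} will appear in the later steps, where these vectors are converted into the Heisenberg basis $\{X_1, Y_1, T\}$ and their horizontal cross product is analysed to locate the characteristic points.
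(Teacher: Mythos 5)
Your computation is correct and is exactly the routine differentiation the paper intends (the paper itself defers the proof to E.1--E.5 of the cited licentiate thesis rather than writing it out). Nothing is missing: the product rule on $[R+s\cos(r/2)]\cos r$ and $[R+s\cos(r/2)]\sin r$, the chain-rule factor of $1/2$, and the linearity in $s$ for $\vec\gamma_s$ account for everything in the stated formulas.
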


\begin{lem}[Step 2]
%\noindent
%\textbf{Step 2)}\\
Consider the parametrisation $\gamma$. The two tangent vector fields $\vec\gamma_r$ and $\vec\gamma_s$ can be written in Heisenberg coordinates as:
\begin{align*}
\vec\gamma_r (r,s)= & \left (   - \frac{1}{2}s \sin \left  ( \frac{r}{2} \right )     \cos r -  \left [ R+s \cos \left  ( \frac{r}{2} \right ) \right ] \sin r  \right ) X \\
&+ \bigg ( - \frac{1}{2}s \sin \left  ( \frac{r}{2} \right )     \sin r + \left [ R+s \cos \left  ( \frac{r}{2} \right ) \right ]  \cos r \bigg ) Y\\
& +  \left  (  s \frac{1}{2} \cos \left  ( \frac{r}{2} \right )  - \left [ R+s \cos \left  ( \frac{r}{2} \right ) \right ]^2 \frac{1}{2} \right )T,\\
\text{and}\hspace{0.8cm} &\\
\vec\gamma_s (r,s) = &     \cos \left  ( \frac{r}{2} \right )  \cos r X   +  \cos \left ( \frac{r}{2} \right )  \sin r Y  +  \sin \left  ( \frac{r}{2} \right )  T .     
\end{align*}
\end{lem}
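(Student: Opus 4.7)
The plan is to use the change-of-basis between the Euclidean frame $\{\partial_x,\partial_y,\partial_t\}$ and the Heisenberg frame $\{X,Y,T\}$ given in Definition \ref{XYT}. Inverting those relations at a point $(x,y,t)$ gives
\[
\partial_x = X + \tfrac{1}{2} y\, T, \qquad \partial_y = Y - \tfrac{1}{2} x\, T, \qquad \partial_t = T,
\]
so any Euclidean tangent vector $a\,\partial_x + b\,\partial_y + c\,\partial_t$ at $(x,y,t)$ equals $a\,X + b\,Y + \bigl(c + \tfrac{1}{2}(a y - b x)\bigr)\,T$. Since the horizontal ($X,Y$) coefficients are unchanged, the entire content of the lemma lies in computing the $T$-component for each of $\vec\gamma_r$ and $\vec\gamma_s$ at the point $\gamma(r,s)$, where $x=[R+s\cos(r/2)]\cos r$ and $y=[R+s\cos(r/2)]\sin r$.

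The first step is to apply this formula to $\vec\gamma_s$. Taking $a=\cos(r/2)\cos r$, $b=\cos(r/2)\sin r$, $c=\sin(r/2)$ from Step~1, the cross-term $a y - b x$ becomes $[R+s\cos(r/2)]\cos(r/2)(\cos r \sin r - \sin r \cos r) = 0$, so the $T$-coefficient is simply $\sin(r/2)$, as claimed.

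The second step is the analogous computation for $\vec\gamma_r$, with $a, b, c$ as in Step~1. Abbreviating $M := R + s\cos(r/2)$, one groups the terms of $a y - b x$ by powers of $M$: the terms linear in $M$ carry $\pm \tfrac{s}{2}\sin(r/2)\sin r\cos r$ and cancel, while the terms quadratic in $M$ give $-M^2\sin^2 r - M^2\cos^2 r = -M^2$ by the Pythagorean identity. Adding $c = \tfrac{s}{2}\cos(r/2)$ then yields the coefficient $\tfrac{s}{2}\cos(r/2) - \tfrac{1}{2}[R+s\cos(r/2)]^2$ appearing in the statement.

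There is no real obstacle here: the argument is a mechanical change-of-basis calculation, and the only thing to notice is the clean cancellation $a y - b x = -M^2$ for $\vec\gamma_r$ (resp.\ $0$ for $\vec\gamma_s$), which is what produces the simple closed-form $T$-components written in the lemma. Once these two $T$-coefficients are verified, the asserted expressions follow directly.
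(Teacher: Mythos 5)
Your proof is correct: the inversion $\partial_x = X + \tfrac{1}{2}yT$, $\partial_y = Y - \tfrac{1}{2}xT$, $\partial_t = T$ is right, and both $T$-coefficient computations (the cancellation $ay-bx=0$ for $\vec\gamma_s$ and $ay-bx=-M^2$ for $\vec\gamma_r$) check out against the stated formulas. The paper itself defers this verification to an external reference (E.1--E.5 in \cite{GClicentiate}) rather than proving it in-text, but the change-of-basis calculation you give is exactly the intended argument.
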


\noindent
Call $\vec{N} (r,s)= \vec{N}_1  (r,s) X + \vec{N}_2  (r,s) Y + \vec{N}_3  (r,s) T$ the normal vector field of $\mathcal{M}$. Such vector is given by the cross product of the two tangent vector fields $\vec\gamma_r$ and $\vec\gamma_s$. Specifically:
$$
\vec{N}= \vec{N}_1 X + \vec{N}_2 Y + \vec{N}_3 T =
\vec\gamma_r \times_\mathbb{H}  \vec\gamma_s =
$$ 
\[ =
\begin{vmatrix}
X & Y & T \\ 
    \frac{-s  \cos r \sin  \frac{r}{2} }{2}   -   \left [ R+s \cos  \frac{r}{2}  \right ] \sin r&
 \frac{-s  \sin r \sin  \frac{r}{2}  }{2}   +  \left [ R+s \cos \frac{r}{2}  \right ]  \cos r  &
  \frac{s  \cos  \frac{r}{2}  }{2}    -  \frac{   \left [ R+s \cos \frac{r}{2} \right ]^2   }{2}\\
\cos \left  ( \frac{r}{2} \right )  \cos r &  \cos \left ( \frac{r}{2} \right )  \sin r &  \sin \left  ( \frac{r}{2} \right )
\end{vmatrix}.
\]

\begin{lem}[Step 3]
Consider  the normal vector field of $\mathcal{M}$, $\vec{N} (r,s)= \vec{N}_1  (r,s) X + \vec{N}_2  (r,s) Y + \vec{N}_3  (r,s) T$. A computation shows that:
\begin{align*}
\vec{N}_1 (r,s)=&
 - \frac{1}{2}s    \sin r 
+  \left [ R+s \cos \left  ( \frac{r}{2} \right ) \right ]  \cos r  \sin \left  ( \frac{r}{2} \right )     
 +  \left [ R+s \cos \left  ( \frac{r}{2} \right ) \right ]^2 \frac{1}{2}  \cos \left ( \frac{r}{2} \right )  \sin r  ,\\
\vec{N}_2 (r,s)=&
\left (   - z^5     +  \frac{1}{2}  z^3   \right )   s^2   
+   \left (    - 2 (R+1)z^4       +    ( R+3) z^2     - \frac{1}{2}   \right )   s   
   - ( R^2  + 2   R) z^3  \\
& + \left ( \frac{1}{2} R^2    +  2   R  \right  )   z   \quad \quad \quad \quad \text{and}\\
 \vec{N}_3 (r,s)=& -   \left [ R+s \cos \left  ( \frac{r}{2} \right ) \right ]   \cos \left ( \frac{r}{2} \right ) .
\end{align*}
with $z=\cos \left  ( \frac{r}{2} \right ) $, $r\in [0,2\pi)$ and $s \in [-w,w]$. 
\end{lem}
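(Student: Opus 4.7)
The plan is to compute $\vec{N} = \vec\gamma_r \times_{\mathbb H} \vec\gamma_s$ componentwise by expanding the $3\times 3$ determinant along the first row. Writing the entries of $\vec\gamma_r$ and $\vec\gamma_s$ in the $\{X,Y,T\}$-basis as given in Step~2, each component $\vec{N}_i$ is a $2\times 2$ minor that I will simplify separately. Throughout I abbreviate $a := \cos(r/2)$ and $b := \sin(r/2)$ and use the double-angle identities $\cos r = a^2-b^2$, $\sin r = 2ab$, together with $a^2+b^2=1$.

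For $\vec{N}_3$ the minor is $(\vec\gamma_r)_X (\vec\gamma_s)_Y - (\vec\gamma_r)_Y (\vec\gamma_s)_X$. The two contributions proportional to $[R+sa]\cos r \sin r$ cancel, and the two $\tfrac{s}{2}$-terms combine as $-\tfrac{s}{2}b\cdot a(\cos^2 r+\sin^2 r)$, which vanishes against a corresponding term; what remains is exactly $-[R+sa]\cos(r/2)$, matching the stated formula. For $\vec{N}_1 = (\vec\gamma_r)_Y (\vec\gamma_s)_T - (\vec\gamma_r)_T (\vec\gamma_s)_Y$, expansion gives three kinds of terms. The two pieces $-\tfrac{s}{2}b^2\sin r$ and $-\tfrac{s}{2}a^2\sin r$ collapse via $a^2+b^2=1$ to $-\tfrac{s}{2}\sin r$, while the remaining contributions are precisely $[R+sa]\cos r\sin(r/2)$ and $\tfrac{1}{2}[R+sa]^2 \cos(r/2)\sin r$, as claimed.

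The main work lies in $\vec{N}_2 = -\bigl[(\vec\gamma_r)_X (\vec\gamma_s)_T - (\vec\gamma_r)_T (\vec\gamma_s)_X\bigr]$. A direct expansion yields
\[
\vec{N}_2 \;=\; \tfrac{s}{2}(a^2+b^2)\cos r + [R+sa]\,b\sin r - \tfrac{1}{2}[R+sa]^2\, a\cos r,
\]
where $a^2+b^2=1$ simplifies the first term. Now I substitute $z=a$, using $\cos r = 2z^2-1$, $\sin r = 2zb$, and $b^2=1-z^2$. The crucial observation is that $b$ enters only through the product $b\sin r = 2zb^2 = 2z(1-z^2)$, so after substitution every $b$ has been eliminated and the expression becomes a genuine polynomial in $s$ and $z$. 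Expanding $[R+sz]^2 = R^2 + 2Rsz + s^2 z^2$ and collecting by powers of $s$ then produces the three coefficients stated.

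The main obstacle is purely bookkeeping in $\vec{N}_2$: the coefficient of $s^2$ comes solely from $-\tfrac{1}{2}s^2 z^3 (2z^2-1)$ and immediately gives $(-z^5+\tfrac{1}{2}z^3)s^2$; the $s^0$ coefficient combines $2zR(1-z^2)$ with $-\tfrac{1}{2}zR^2(2z^2-1)$ to yield $-(R^2+2R)z^3+(\tfrac{1}{2}R^2+2R)z$; but the $s^1$ coefficient requires merging three distinct source terms, namely $\tfrac{s}{2}(2z^2-1)$ from the first piece, $2sz^2(1-z^2)$ from the middle piece, and $-Rsz^2(2z^2-1)$ from the cross-term in $[R+sz]^2$, into the single expression $-2(R+1)z^4+(R+3)z^2-\tfrac{1}{2}$. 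This is the only step that is algebraically dense; the trigonometric simplifications for $\vec{N}_1$ and $\vec{N}_3$ are essentially one-line applications of the Pythagorean identity.
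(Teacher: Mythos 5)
Your proposal is correct and follows the same route as the paper, which simply expands the displayed determinant $\vec\gamma_r \times_{\mathbb{H}} \vec\gamma_s$ minor by minor (deferring details to the cited licentiate thesis); I checked all three final formulas and your extraction of the $s^2$, $s^1$, $s^0$ coefficients of $\vec{N}_2$ after the substitution $z=\cos(r/2)$, and everything agrees with the lemma. One bookkeeping slip in your $\vec{N}_3$ paragraph: writing $\rho=R+s\cos(r/2)$, the pieces that cancel outright in $(\vec\gamma_r)_X(\vec\gamma_s)_Y-(\vec\gamma_r)_Y(\vec\gamma_s)_X$ are the two identical terms $-\tfrac{s}{2}\cos(\tfrac{r}{2})\sin(\tfrac{r}{2})\cos r\sin r$, while it is the pieces $-\rho\cos(\tfrac{r}{2})\sin^2 r$ and $-\rho\cos(\tfrac{r}{2})\cos^2 r$ that combine via the Pythagorean identity to give $-\rho\cos(\tfrac{r}{2})$; there are no terms proportional to $\rho\cos r\sin r$, so your description has the roles of ``cancel'' and ``combine'' swapped, though the stated result is unaffected.
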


\begin{proof}[Proof of Proposition \ref{Mobius}] 
To find  pairs of parameters $(r,s)$ corresponding to characteristic points we have to impose
$$
\begin{cases}
\vec{N}_1  (r,s)= 0,\\
\vec{N}_2  (r,s)= 0.
\end{cases}
$$
A computation shows that $\vec{N}_1  (r,s)= 0$ only at the points $(x(r,s),y(r,s),t(r,s))$ with
%\begin{align*}
$$
(r,s)=
\begin{cases}
(0,s), \quad  &s \in [-w,w], \quad \text{or} \\
%\end{align*}
%\begin{align*}
% (r,s)= 
\left (r, \frac{ -(R+1) z^2 + 1 \pm \sqrt{ z^4  -( R+2 ) z^2 +1 } }{ z^3} \right  ),
 \quad &r \in [0, 2 \pi), \ r \neq \pi, \ z=\cos \frac{r}{2}.
%\end{align*}
\end{cases}
$$
Evaluating these possibilities on $\vec{N}_2  (r,s)= 0$, another computation (see E.6 and E.7 in \cite{GClicentiate}) shows that the system $\{ \vec{N}_1  (r,s)= 0; \
\vec{N}_2  (r,s)= 0 \}$ is verified only by the pair  %(that defines then a characteristic point)    %obtained by
$$
(r,s)= \left (0,\frac{ -2R+1 - \sqrt{-4R+1}  }{2} \right ),\quad \text{when} \quad 0< R < \frac{1}{4},
$$
which corresponds to the point $\tilde p = (\bar x,\bar y,\bar t) = \left ( \frac{1}{2} - \sqrt{-R + \frac{1}{4} } ,0,0 \right ) $:
$$
\begin{cases}
\bar x	=	[R+s \cos ( \frac{r}{2} ) ] \cos r =R +\frac{ -2R+1 - \sqrt{-4R+1}  }{2} = \frac{ 1 - \sqrt{-4R+1}  }{2} =\frac{1}{2} - \sqrt{-R + \frac{1}{4} }>0	\\
\bar y	=	[R+s \cos ( \frac{r}{2} ) ] \sin r=0	\\
\bar t	=	s \sin ( \frac{r}{2} )=0.	
\end{cases}
$$
This is a characteristic point.  
%The full computation is at Observations \ref{N1=0N2=0} and \ref{partialconclusion}.
Notice that it is not strange that the number of characteristic points depends on the radius $R$, as changing the radius is not an anisotropic dilation. Therefore the surface
$$
\widetilde{\mathcal{M}} :=\mathcal{M}  \ \setminus \  U_{\tilde p},
$$
where $U_{\tilde p}$ is a neighbourhood of $\tilde p$ with smooth boundary, is indeed a $C^1$-Euclidean surface with $C(\widetilde{\mathcal{M}}) = \varnothing$, hence $1$-codimensional $\mathbb{H}$-regular, and not Euclidean-orientable. This completes the proof. 
\end{proof}

%-----------------------------------------------------------------------------------------------------------------------------------------------------
%-----------------------------------------------------------------------------------------------------------------------------------------------------
%-----------------------------------------------------------------------------------------------------------------------------------------------------

\subsection{Comparing Orientabilities}

In this section we first recall the definition of Euclidean-orientability and introduce and characterise the notion of orientability in the Heisenberg sense ($\mathbb{H}$-orientability). Next we prove that, under left translations and anisotropic dilations, $\mathbb{H}$-regularity is invariant for $1$-codimensional surfaces and $\mathbb{H}$-orientability is invariant for $\mathbb{H}$-regular $1$-codimensional surfaces. Lastly, we show how the two notions of orientability are related, concluding that, for regular enough surfaces, $\mathbb{H}$-orientability implies Euclidean-orientability. This allows us to conclude that non-$\mathbb{H}$-orientable $\mathbb{H}$-regular surfaces exist, at least when $n=1$.\\

\noindent
Recall that, by Definition \ref{Hreg}, $S$ is a $\mathbb{H}$-regular $1$-codimensional surface in $\mathbb{H}^n$ if:
\begin{equation}\label{eq1}
\text{for all } p \in S \ \text{ there exists a neighbourhood }  U  \text{ and }  f : U \to \mathbb{R}, \ f \in C_{\mathbb{H}}^1 (U, \mathbb{R}),  \text{ so that } 
\end{equation}
$$
S \cap U = \{ f=0 \} \text{ and } \nabla_{\mathbb{H}} f \neq 0 \text{ on } U.
$$
On the other hand, if $S$ is $C^1$-Euclidean, then (see for instance the introduction of \cite{BAL}):
\begin{equation}\label{eq2}
\text{for all } p \in S \ \text{ there exists a neighbourhood }  U \text{ and } g : U \to \mathbb{R}, \ g \in C^1(U, \mathbb{R}), \text{ so that } 
\end{equation}
$$
S \cap U = \{ g=0 \} \text{ and } \nabla g \neq 0 \text{ on } U.
$$
These two notions of regularity are obviously similar. Next we connect each of them to a definition of orientability, which we then compare.

%-----------------------------------------------------------------------------------------------------------------------------------------------------
%-----------------------------------------------------------------------------------------------------------------------------------------------------
%-----------------------------------------------------------------------------------------------------------------------------------------------------

\subsubsection{$\mathbb{H}$-Orientability in $\mathbb{H}^n$}\label{Horient}$\\$

%For $1$-codimensional surfaces, we define and characterise the orientability in the Heisenberg sense.\\
%
\noindent
%First we state some basic definitions of Euclidean orientability for the only purpose of having a more straightforward comparison later on.  
Consider a surface $S \subseteq \mathbb{H}^n$ and the space of  vector fields tangent to $S$, $ T S $. A vector $v$ is \emph{normal to $S$}, $v \perp S$, if $\langle v,w \rangle =0$ for all $w \in TS.$

\begin{defin}\label{Eorientable}
Consider a $1$-codimensional $C^1$-Euclidean surface $S \subseteq \mathbb{H}^n$ with $C(S) = \varnothing$. The surface $S$ is \emph{Euclidean-orientable} (or \emph{orientable in the Euclidean sense}) if there exists a continuous global $1$-vector field 
$$
 n_E=\sum_{i=1}^{n} \left ( n_{E,i} \partial_{x_i} + n_{E,n+i} \partial_{y_i} \right ) + n_{E,2n+1} \partial_t  \neq 0 ,
$$
defined on $S$ and normal to $S$. Such $n_{E}$ is called \emph{Euclidean normal vector field of} $S$.
\end{defin}

%\noindent
%The previous definition is equivalent to the following statement:\\
%\emph{there exists a continuous global $2n$-vector field $t_E$ on $S$, so that $t_E$ is tangent to $S$ in the sense that $*t_E$ is normal to $S$};\\ 
%where $*$ is the Hodge operator (see Definition \ref{hodge}). It is also straightforward that, up to a choice of sign, $t_E = * n_E.$
\noindent
Equivalently, the surface $S$ is Euclidean-orientable if there exists a continuous global $2n$-vector field $t_E$ on $S$, so that $t_E$ is tangent to $S$. This is the same as saying that $*t_E$ is normal to $S$, where $*$ is the Hodge operator (see Definition \ref{hodge}). It is also straightforward that, up to a choice of sign, $t_E = * n_E.$

\begin{defin}
Consider two vectors $v, w \in \mathfrak{h}_1 %HT\mathbb{H}^n
$ in $\mathbb{H}^n$; they are \emph{orthogonal in the Heisenberg sense}, $v \perp_H  w$,  if 
$$
 \langle v,w \rangle_H = 0,
$$
where $\langle \cdot , \cdot \rangle_H$ is the scalar product that makes $X_j$'s and $Y_j$'s orthonormal.
\end{defin}

\begin{defin}
Consider a $1$-codimensional $C^1$-Euclidean surface $S \subseteq \mathbb{H}^n$ with $C(S) = \varnothing$. Consider also a vector $v \in  \mathfrak{h}_1$. We say that $v$ and $S$ are $\mathbb{H}$-orthogonal (\emph{orthogonal in the Heisenberg sense}), and we write $v \perp_H  S$, if 
$$
 \langle v,  w_{\vert_{ \mathfrak{h}_1 }}  \rangle_H = 0, \quad \text{for all } \ w \in TS.
$$
In the same way, we say that a $2n$-vector field $v \in {\prescript{}{}\bigwedge}_{2n} \mathfrak{h}$ %on $S$
 is $\mathbb{H}$-tangent (\emph{tangent to} $S$ \emph{in the Heisenberg sense})  to $S$ if  $*v \in  \mathfrak{h}_1$ and
$$
\langle *v, w_{\vert_{ \mathfrak{h}_1 }} \rangle_H = 0,  \quad \text{for all } \ w \in TS.
$$
\end{defin}

%$$
%d \alpha = (d \alpha)_{\vert_{ \left ( {\prescript{}{}\bigwedge}^{n+1} \mathfrak{h}_1  \right )^\perp }}   +  
% (d \alpha)_{\vert_{ {\prescript{}{}\bigwedge}_{k} \mathfrak{h}_1 }},
%$$

\begin{defin}\label{def:Horientable}
Consider a $1$-codimensional $C^1$-Euclidean surface $S \subseteq \mathbb{H}^n$ with $C(S) = \varnothing$. We say that $S$ is $\mathbb{H}$\emph{-orientable} (or \emph{orientable in the Heisenberg sense})  if there exists a continuous 
horizontal %added 
global 
never-null %added
$1$-vector field 
$n_{\mathbb{H}}$, i.e., %added
$$
n_{\mathbb{H}}=\sum_{i=1}^{n} \left ( n_{\mathbb{H},i} X_i + n_{\mathbb{H},n+i} Y_i \right ) \neq 0,
$$
defined on $S$ so that $n_{\mathbb{H}}$ and $S$ are $\mathbb{H}$-orthogonal. % in the Heisenberg sense, $n_{\mathbb{H}} \perp_H S$. 
%Such $n_{\mathbb{H}}$ will be called \emph{Heisenberg normal vector field of} $S$.
Note that $n_{\mathbb{H}}$ is consistent with Definition \ref{def:hornormal}.
\end{defin}

\begin{lem}
Consider a $1$-codimensional $C^1$-Euclidean surface $S \subseteq \mathbb{H}^n$ with $C(S) = \varnothing$. The following are equivalent:
\begin{enumerate}[label=(\roman*)]
\item
$S$ is $\mathbb{H}$\emph{-orientable},
\item
there exists a continuous global $2n$-vector field $t_\mathbb{H}$ on $S$ so that $t_\mathbb{H}$ is $\mathbb{H}$-tangent to $S$. % in the sense that $* t_\mathbb{H}$ is $\mathbb{H}$-normal to $S$.
\end{enumerate}
\end{lem}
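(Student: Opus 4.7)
The plan is to use the Hodge operator from Definition \ref{hodge} as the explicit bijection realising the equivalence, essentially replaying the duality between normal and tangent vectors introduced for $\mathbb{H}$-regular surfaces in Definition \ref{def:hornormal}. Since $*:\bigwedge_1 \mathfrak{h} \to \bigwedge_{2n} \mathfrak{h}$ is a linear isomorphism, composing with $*$ (or its inverse, up to sign) preserves continuity and the never-null property, so the content lies entirely in matching the orthogonality conditions.

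For (i) $\Rightarrow$ (ii), assume $n_{\mathbb{H}}$ is a continuous, nowhere-vanishing horizontal $1$-vector field with $n_{\mathbb{H}} \perp_H S$. Define $t_{\mathbb{H}} := * n_{\mathbb{H}} \in \bigwedge_{2n}\mathfrak{h}$. Since $*$ is a linear isomorphism, $t_{\mathbb{H}}$ is continuous and nonzero on $S$. Moreover $* t_{\mathbb{H}} = \pm n_{\mathbb{H}} \in \mathfrak{h}_1$, so the first requirement in the definition of $\mathbb{H}$-tangency is satisfied. The second requirement
\[
\langle *t_{\mathbb{H}}, w_{\vert_{\mathfrak{h}_1}}\rangle_H = 0 \quad \text{for all } w \in TS
\]
follows directly from $n_{\mathbb{H}} \perp_H S$, after absorbing the sign produced by $**$.

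For (ii) $\Rightarrow$ (i), assume $t_{\mathbb{H}}$ is a continuous global $2n$-vector field on $S$ which is $\mathbb{H}$-tangent to $S$. By definition this already forces $*t_{\mathbb{H}} \in \mathfrak{h}_1$, so setting $n_{\mathbb{H}} := * t_{\mathbb{H}}$ produces a continuous horizontal $1$-vector field on $S$. Never-vanishing follows because $*$ is an isomorphism. The condition
\[
\langle n_{\mathbb{H}}, w_{\vert_{\mathfrak{h}_1}}\rangle_H = \langle *t_{\mathbb{H}}, w_{\vert_{\mathfrak{h}_1}}\rangle_H = 0 \quad \text{for all } w \in TS
\]
is exactly the $\mathbb{H}$-tangency condition assumed on $t_{\mathbb{H}}$, hence $n_{\mathbb{H}} \perp_H S$ and $S$ is $\mathbb{H}$-orientable.

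There is no real obstacle here; the only mild point to watch is the sign from $*\circ *$ and the fact that, in (ii) $\Rightarrow$ (i), the assumption $*t_{\mathbb{H}} \in \mathfrak{h}_1$ is what guarantees that the $1$-vector field we produce is actually horizontal rather than merely an element of $\mathfrak{h}$. Once these are noted, the lemma reduces to applying the Hodge isomorphism and unwinding the definition of $\mathbb{H}$-orthogonality versus $\mathbb{H}$-tangency.
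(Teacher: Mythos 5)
Your proposal is correct and is exactly the argument the paper has in mind: the paper gives no separate proof of this lemma, only the remark immediately after it that, up to a choice of sign, $t_\mathbb{H}=*n_\mathbb{H}$, and your write-up simply unwinds that Hodge duality in both directions, including the two points that actually need checking (the sign from $*\circ*$ and the horizontality of $*t_\mathbb{H}$ in the converse). The only implicit convention worth keeping in mind is that in (ii) the field $t_\mathbb{H}$ is understood to be never-null, which is what makes your appeal to the isomorphism property of $*$ yield a nowhere-vanishing $n_\mathbb{H}$.
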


\noindent
One can easily see that, up to a choice of sign, $t_\mathbb{H}= * n_\mathbb{H}.$\\
It is possible to give an equivalent definition of orientability using differential forms by saying that a manifold is orientable (either in the Euclidean or Heisenberg sense) if and only if there exists a continuous volume form on it, where a volume form is a never-null form of maximal order.

\begin{lem}
Consider a $1$-codimensional $C^1$-Euclidean surface $S \subseteq \mathbb{H}^n$ with $C(S) = \varnothing$.
The following are equivalent:
\begin{enumerate}[label=(\roman*)]
\item
$S$ is $\mathbb{H}$-orientable,
\item
$S$ allows a continuous volume form $\omega_\mathbb{H}$ (a never-null form of maximal order), which can be chosen so that the following property holds: 
$$
\langle \omega_\mathbb{H} \vert t_\mathbb{H} \rangle=1.
$$
\end{enumerate}
\end{lem}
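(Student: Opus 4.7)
The plan is to reduce this lemma to the previous one, which identifies $\mathbb{H}$-orientability with the existence of a continuous global $\mathbb{H}$-tangent nowhere-zero $2n$-vector field $t_\mathbb{H}$ on $S$, and then to invoke the pointwise duality $\omega \mapsto \omega^{*}$ between $2n$-forms and $2n$-vectors introduced earlier in the paper.

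For the direction $(i) \Rightarrow (ii)$, I take the $\mathbb{H}$-tangent vector field $t_\mathbb{H}$ supplied by the preceding lemma and set $\omega_\mathbb{H}$ to be the unique $2n$-form with
$$
\omega_\mathbb{H}^{*} \;=\; \frac{t_\mathbb{H}}{\langle t_\mathbb{H}, t_\mathbb{H}\rangle}.
$$
Since $t_\mathbb{H}$ is continuous and nowhere-vanishing, $\omega_\mathbb{H}$ inherits both properties, hence is a continuous volume form. The defining identity $\langle \omega \vert V\rangle = \langle \omega^{*}, V\rangle$ then yields
$$
\langle \omega_\mathbb{H}\,\vert\, t_\mathbb{H}\rangle \;=\; \langle \omega_\mathbb{H}^{*}, t_\mathbb{H}\rangle \;=\; \frac{\langle t_\mathbb{H},t_\mathbb{H}\rangle}{\langle t_\mathbb{H},t_\mathbb{H}\rangle} \;=\; 1.
$$

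For the direction $(ii) \Rightarrow (i)$, I read the hypothesis as providing simultaneously a continuous never-null $2n$-form $\omega_\mathbb{H}$ and a dual $2n$-vector field $t_\mathbb{H}$ with $\langle\omega_\mathbb{H}\vert t_\mathbb{H}\rangle=1$; since $t_\mathbb{H}$ is by assumption $\mathbb{H}$-tangent to $S$, the preceding lemma immediately delivers $\mathbb{H}$-orientability. Intrinsically, one can recover $t_\mathbb{H}$ from $\omega_\mathbb{H}$ by setting $t_\mathbb{H}=\omega_\mathbb{H}^{*}/\langle \omega_\mathbb{H}^{*},\omega_\mathbb{H}^{*}\rangle$, which inherits continuity and non-vanishing from $\omega_\mathbb{H}$ and satisfies the normalization by the same computation as above.

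The main obstacle I anticipate is conceptual rather than computational: a generic continuous never-null $2n$-form on $S$ need not have its dual $2n$-vector lying in the $\mathbb{H}$-tangent subspace of ${\prescript{}{}\bigwedge}_{2n}\mathfrak{h}$, since the latter is a proper subspace cut out by the condition $*t_\mathbb{H}\in\mathfrak{h}_1$. It is therefore essential to interpret the compatibility clause $\langle \omega_\mathbb{H}\vert t_\mathbb{H}\rangle = 1$ as encoding the existence of an $\mathbb{H}$-tangent partner $t_\mathbb{H}$ for $\omega_\mathbb{H}$; once this is made explicit, the proof reduces to pure duality and requires no analytic estimates.
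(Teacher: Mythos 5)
The paper states this lemma without proof; the only guidance it offers is the explicit formula for $\omega_\mathbb{H}$ in $\mathbb{H}^1$ displayed immediately afterwards. Your direction $(i)\Rightarrow(ii)$ is correct and is evidently what the author intends: with $t_\mathbb{H}=n_{\mathbb{H},1}\,Y\wedge T-n_{\mathbb{H},2}\,X\wedge T$ one has $\langle t_\mathbb{H},t_\mathbb{H}\rangle=n_{\mathbb{H},1}^2+n_{\mathbb{H},2}^2$, so your prescription $\omega_\mathbb{H}^*=t_\mathbb{H}/\langle t_\mathbb{H},t_\mathbb{H}\rangle$ reproduces exactly the paper's form $\frac{n_{\mathbb{H},1}}{n_{\mathbb{H},1}^2+n_{\mathbb{H},2}^2}\,dy\wedge\theta-\frac{n_{\mathbb{H},2}}{n_{\mathbb{H},1}^2+n_{\mathbb{H},2}^2}\,dx\wedge\theta$, and continuity and non-vanishing pass through the linear isomorphism $\omega\mapsto\omega^*$. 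Nothing is missing there.

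The weak point is $(ii)\Rightarrow(i)$, where you offer two alternatives and neither is satisfactory as written. Your primary reading --- that condition $(ii)$ already hands you a global continuous $\mathbb{H}$-tangent field $t_\mathbb{H}$ --- makes the direction vacuous: by Definition \ref{def:hornormal} the unsubscripted symbol $t_\mathbb{H}$ denotes precisely a globally defined tangent $2n$-vector field, which is already equivalent to $(i)$ by the preceding lemma, so you would be assuming what is to be proved. Your fallback, $t_\mathbb{H}=\omega_\mathbb{H}^*/\langle\omega_\mathbb{H}^*,\omega_\mathbb{H}^*\rangle$, fails for exactly the reason you state in your last paragraph: for a generic never-null $2n$-form (e.g.\ $dx\wedge dy$ in $\mathbb{H}^1$, whose dual $X\wedge Y$ satisfies $*(X\wedge Y)=T\notin\mathfrak{h}_1$ and which pairs to $0$ with $t_{\mathbb{H},p}$) the dual multivector is not $\mathbb{H}$-tangent. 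The step that actually closes this direction is a sign-gluing argument on the locally defined objects: on each chart $U$ with defining function $f$ as in \eqref{eq1}, the field $t_{\mathbb{H},p}=*\bigl(\nabla_\mathbb{H} f_p/\vert\nabla_\mathbb{H} f_p\vert\bigr)$ is continuous and determined up to the single sign ambiguity $f\mapsto -f$; the hypothesis that $\langle\omega_\mathbb{H}\vert t_{\mathbb{H},p}\rangle$ can be normalised to $1$ --- in particular is nowhere zero --- lets you fix on each chart the sign making the pairing positive, and these local choices agree on overlaps because the pairing is continuous and never vanishes. The resulting global continuous $\mathbb{H}$-tangent $t_\mathbb{H}$ then yields $\mathbb{H}$-orientability by the preceding lemma. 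You should replace your two alternative readings by this gluing step.
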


\begin{comment}%%%%%%%%%%%%%%%%%%%%%%%%%%%%%%%%%%%%%%%%%%%%%%%%%
\noindent
Note that one can write both $n_\mathbb{H}$ and $t_\mathbb{H}$ locally in $\mathbb{H}^1$: by condition \eqref{eq1}, locally on a neighbourhood $U$ of a point $p$, $n_\mathbb{H}=\lambda \nabla_{\mathbb{H}} f$, with $\lambda \in C^\infty (\mathbb{H}^1,\mathbb{R})$ and $ f \in C_{\mathbb{H}}^1 (U, \mathbb{R})$.  So,
$$
n_{\mathbb{H}} = n_{\mathbb{H},1}  X  +n_{\mathbb{H},2}  Y  =\lambda X f X +  \lambda Y f Y
$$
and, since $t_{\mathbb{H}}  = * n_{\mathbb{H}} = n_{\mathbb{H},1} Y \wedge T - n_{\mathbb{H},2} X \wedge T $,
$$
t_{\mathbb{H}}  =    \lambda X f   Y \wedge T   - \lambda Y f  X \wedge T .
$$
In this case, the corresponding volume form is
$$
\omega_\mathbb{H}= \frac{ n_{\mathbb{H},1} }{ n_{\mathbb{H},1}^2+n_{\mathbb{H},2}^2} dy \wedge \theta 
- \frac{ n_{\mathbb{H},2} }{ n_{\mathbb{H},1}^2+n_{\mathbb{H},2}^2 } dx \wedge \theta.
$$
\end{comment}%%%%%%%%%%%%%%%%%%%%%%%%%%%%%%%%%%%%%%%%%%%%%%%%%%%%%%%%%%%%%%%%%%%%%%%%%%%%%%%%%%%%%%%%%%%%%%%%%%%%%%%%%%%
\noindent
Note that, if the condition is verified in $\mathbb{H}^1$, the volume form is of the kind:
$$
\omega_\mathbb{H}= \frac{ n_{\mathbb{H},1} }{ n_{\mathbb{H},1}^2+n_{\mathbb{H},2}^2} dy \wedge \theta 
- \frac{ n_{\mathbb{H},2} }{ n_{\mathbb{H},1}^2+n_{\mathbb{H},2}^2 } dx \wedge \theta.
$$
\noindent
Also note that by condition \eqref{eq1}, locally on a neighbourhood $U$ of a point $p$, $n_\mathbb{H}=\lambda \nabla_{\mathbb{H}} f$, with $\lambda \in C^\infty (\mathbb{H}^1,\mathbb{R})$ and $ f \in C_{\mathbb{H}}^1 (U, \mathbb{R})$.  So,
$$
n_{\mathbb{H}} = n_{\mathbb{H},1}  X  +n_{\mathbb{H},2}  Y  =\lambda X f X +  \lambda Y f Y
$$
and, since $t_{\mathbb{H}}  = * n_{\mathbb{H}} = n_{\mathbb{H},1} Y \wedge T - n_{\mathbb{H},2} X \wedge T $,
$$
t_{\mathbb{H}}  =    \lambda X f   Y \wedge T   - \lambda Y f  X \wedge T .
$$

\begin{ex}
Consider a $\mathbb{H}$-orientable 1-codimensional surface $S \subseteq \mathbb{H}^1$. Then at each point $p \in S$ there exist two continuous global linearly independent vector fields $ \vec{r}$ and $  \vec{s}$ tangent on $S$, $ \vec{r},  \vec{s} \in T_p S$. With the previous notation, we can explicitly find such a pair by solving the following list of conditions:
%$$
%\begin{cases}
    \begin{multicols}{2}
\begin{enumerate}[nosep]
\item
$\langle \vec{r},  \vec{s} \rangle_{H} =0$,
%\\
\item
$\langle \vec{r},  n_\mathbb{H} \rangle_{H} =0$,
%\\
\item
$\langle   \vec{s}, n_\mathbb{H} \rangle_{H} =0$,
%\\
\item
$\vert \vec{r} \vert_{H} =1$,
%\\
\item
$\vert \vec{s} \vert_{H} =1$,
%\\
\item
$\vec{r} \times  \vec{s} = n_\mathbb{H}$,
%\\
\item
$\vec{r} \wedge  \vec{s} = t_\mathbb{H}.$
\end{enumerate}
    \end{multicols}
%\end{cases}
%$$
\noindent
One can (but it is not necessary) choose $\vec{r} = T$ since $n_\mathbb{H} \in \spn \{X,Y\}$. Then one can take $\vec{s} = aX+bY$, so the first two conditions are satisfied.
The third condition is $\langle  \vec{s}, n_\mathbb{H} \rangle_{H} =0$, meaning
$$
 a n_{\mathbb{H},1} + b n_{\mathbb{H},2} =0,
$$
%and the
whose solution is
$$
\begin{cases}
a= c n_{\mathbb{H},2}\\
b = -c n_{\mathbb{H},1}.
\end{cases}
$$
with $c$ arbitrary. The fourth condition is verified by our choice of $\vec{r}$.\\
We have just seen that there exists a local function $f$ so that $\vec{s} = c n_{\mathbb{H},2} X - c n_{\mathbb{H},1} Y$ becomes
$$
\vec{s} =  c  \lambda Y f X  -c \lambda X f Y.
$$
Then the fifth condition, $\vert \vec{s} \vert_{\mathbb{H}} =1$, gives
$$
1= \sqrt{c^2  \lambda^2 (Y f)^2 + c^2 \lambda^2 (X f)^2} = \vert c \lambda\vert \sqrt{ (Y f)^2 +  (X f)^2} = \vert c \lambda\vert \cdot \vert   \nabla_\mathbb{H} f   \vert,
$$
meaning
$$
 c=\pm \frac{1}{ \vert   \lambda  \nabla_\mathbb{H} f   \vert}.
$$
So one has that
\begin{align*}
\vec{s} &= \pm \frac{1}{ \vert   \lambda  \nabla_\mathbb{H} f   \vert} \left (  \lambda Y f X  - \lambda X f Y \right )=
\pm \frac{\lambda}{ \vert   \lambda  \vert }    \frac{  Y f X  -  X f Y }{  \vert  \nabla_\mathbb{H} f   \vert}\\
&=
\pm \ sign (\lambda)   \left ( 
  \frac{  Y f  }{  \vert  \nabla_\mathbb{H} f   \vert}X  -     \frac{  X f  }{  \vert  \nabla_\mathbb{H} f   \vert}Y
\right )  .
\end{align*}
The sixth condition is $\vec{r} \times  \vec{s} = n_{\mathbb{H}}$, so:
$$
 n_{\mathbb{H}}= \vec{r} \times  \vec{s} =
\begin{vmatrix}
X & Y & T \\ 
0 & 0 &  1 \\ 
c  n_{\mathbb{H},2} & -c  n_{\mathbb{H},1}  & 0
\end{vmatrix}
=
c  n_{\mathbb{H},2} Y + c  n_{\mathbb{H},1} X =c  n_{\mathbb{H}}.
$$
Then it is necessary to take $c=1$ and one has $\vert \lambda \nabla_\mathbb{H} f   \vert = 1$, namely,
$$
 \lambda = \pm  \frac{1}{\vert \nabla_\mathbb{H} f   \vert },
$$
and
$$
\vec{s} =   \lambda Y f X  - \lambda X f Y   = \pm    \left ( 
  \frac{  Y f  }{  \vert  \nabla_\mathbb{H} f   \vert}X  -     \frac{  X f  }{  \vert  \nabla_\mathbb{H} f   \vert}Y
\right )  .
$$
Finally, we verify $\vec{r} \wedge  \vec{s} =  t_{\mathbb{H}}$ (the seventh and last condition):
$$
\vec{r} \wedge \vec{s} =  T \wedge ( \lambda Y f X  - \lambda X f Y  ) =\lambda X f Y \wedge T - \lambda Y f X \wedge T= t_{\mathbb{H}}.
$$
\end{ex}

%-----------------------------------------------------------------------------------------------------------------------------------------------------
%-----------------------------------------------------------------------------------------------------------------------------------------------------
%-----------------------------------------------------------------------------------------------------------------------------------------------------

\subsubsection{Invariances}

For $1$-codimensional surfaces, the $\mathbb{H}$-regularity is invariant under left translations and anisotropic dilations. Furthermore, for $\mathbb{H}$-regular $1$-codimensional surfaces, the $\mathbb{H}$-orientability is invariant under the same two types of transformations.

\begin{prop}\label{tau_prop}
Consider the left translation map $\tau_{\bar{p}} : \mathbb{H}^n \to \mathbb{H}^n$
, $\bar{p} \in \mathbb{H}^n$ and a $\mathbb{H}$-regular $1$-codimensional surface $S \subseteq  \mathbb{H}^n$. Then $\tau_{\bar{p}} S  := \{ \bar{p}*p ; \ p \in S   \}$ is again a $\mathbb{H}$-regular $1$-codimensional surface in $\mathbb{H}^n$.
\end{prop}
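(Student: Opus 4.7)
My plan is to push forward the defining function of $S$ by the left translation $\tau_{\bar{p}}$ and use the left invariance of the horizontal vector fields $X_j, Y_j$ to show that the pushed-forward function still satisfies the conditions of Definition \ref{Hreg}. The key ingredients are that $\tau_{\bar{p}}$ is a $C^\infty$-diffeomorphism of $\mathbb{H}^n$ (with smooth inverse $\tau_{\bar{p}^{-1}} = \tau_{\bar{p}}^{-1}$) and that, by construction, $X_j$ and $Y_j$ are left invariant.

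Concretely, I would fix an arbitrary $q \in \tau_{\bar{p}} S$, write $q = \bar{p} * p$ with $p \in S$, and apply $\mathbb{H}$-regularity of $S$ at $p$: there is an open neighbourhood $U$ of $p$ and $f \in C_{\mathbb{H}}^1(U, \mathbb{R})$ with $S \cap U = \{f = 0\}$ and $\nabla_{\mathbb{H}} f \neq 0$ on $U$. I would then set $\tilde{U} := \tau_{\bar{p}} U$, which is an open neighbourhood of $q$ since $\tau_{\bar{p}}$ is a homeomorphism, and define
$$
\tilde{f} : \tilde{U} \to \mathbb{R}, \qquad \tilde{f}(\tilde{p}) := f\bigl(\tau_{\bar{p}^{-1}}(\tilde{p})\bigr) = f(\bar{p}^{-1} * \tilde{p}).
$$
A direct check gives $(\tau_{\bar{p}} S) \cap \tilde{U} = \tau_{\bar{p}}(S \cap U) = \{\tilde{f} = 0\}$.

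It remains to verify that $\tilde{f} \in C_{\mathbb{H}}^1(\tilde{U}, \mathbb{R})$ with $\nabla_{\mathbb{H}} \tilde{f} \neq 0$ throughout $\tilde{U}$. Here the left invariance of $X_j$ and $Y_j$ is the essential point: for any smooth $g$ on $U$ and any $\tilde{p} \in \tilde{U}$,
$$
(X_j \tilde{f})(\tilde{p}) = (X_j f)(\bar{p}^{-1} * \tilde{p}), \qquad (Y_j \tilde{f})(\tilde{p}) = (Y_j f)(\bar{p}^{-1} * \tilde{p}),
$$
so that $\nabla_{\mathbb{H}} \tilde{f}(\tilde{p}) = \sum_{j=1}^{n} \bigl[(X_j f)(\bar{p}^{-1}*\tilde{p}) X_j + (Y_j f)(\bar{p}^{-1}*\tilde{p}) Y_j\bigr]$. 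This is nothing but a relabelling of $\nabla_{\mathbb{H}} f$ at the point $\bar{p}^{-1} * \tilde{p} \in U$, which by hypothesis is nonzero. Continuity of $\nabla_{\mathbb{H}} \tilde{f}$ on $\tilde{U}$ is inherited from that of $\nabla_{\mathbb{H}} f$ on $U$ composed with the continuous map $\tau_{\bar{p}^{-1}}$; equivalently, by Notation \ref{CH1}, one can observe that the $\mathrm{P}$-differential $d_H \tilde{f} = d_H f \circ d_H \tau_{\bar{p}^{-1}}$ is continuous because left translations are smooth group homomorphisms in the Carnot sense.

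The main (and only real) obstacle is the left invariance identity relating $X_j \tilde{f}$ and $X_j f$; once this is in hand the rest is formal. If one wants to avoid invoking left invariance abstractly, one can instead verify the identity by a direct computation from Definition \ref{XYT}, using that left translation acts on coordinates as $(x,y,t) \mapsto (\bar{x}+x, \bar{y}+y, \bar{t}+t + \frac{1}{2}\sum_{j}(\bar{x}_j y_j - \bar{y}_j x_j))$ and checking that $X_j$, $Y_j$ commute with pullback by $\tau_{\bar{p}^{-1}}$; this is precisely the content of their being left invariant and was already noted in Subsection \ref{lefthor}. The argument is independent of the dimension $n$, and the same $f \mapsto \tilde{f}$ construction will be the natural template for the analogous statement for anisotropic dilations (where one further uses the homogeneity of $X_j, Y_j$ of degree $1$).
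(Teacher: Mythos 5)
Your proposal is correct and follows essentially the same route as the paper: both define $\tilde f := f\circ\tau_{\bar p}^{-1}$ on the translated neighbourhood $\tau_{\bar p}U$, verify that its zero set is $(\tau_{\bar p}S)\cap\tau_{\bar p}U$, and use the left invariance of the $X_j$, $Y_j$ to conclude that $\nabla_{\mathbb{H}}\tilde f$ is the composition of $\nabla_{\mathbb{H}}f$ with $\tau_{\bar p}^{-1}$ and hence nonvanishing. Your additional remarks on continuity of the P-differential and on the direct coordinate verification of left invariance are consistent with, and slightly more explicit than, the paper's argument.
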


\begin{proof}
Since $\tau_{\bar{p}} S = \{ \bar{p}*p ; p \in S   \}$, for all $ q \in \tau_{\bar{p}} S$ there exists a point $ p \in S$ so that $q=\bar{p}*p$. 
For such $p \in S $, there exists a neighbourhood $U_p$ and a function $f: U_p \to \mathbb{R}$ so that $S \cap U_p = \{ f=0 \}$ and $\nabla_\mathbb{H} f \neq 0$ on $U_p$. 
Define $U_q := \tau_{\bar{p}} U_p = \bar{p} * U_p $, which is a neighbourhood of $q=  \bar{p}*p$, and a function $\tilde{f}: = f \circ \tau_{\bar{p}}^{-1} : U_q \to \mathbb{R} $. 
Then, for all $ q' \in U_q$,
$$
\tilde{f}(q') = (f \circ \tau_{\bar{p}}^{-1})(q')  =  f (\bar{p}^{-1} * \bar{p}* p' ) = f(p')=0,
$$
where $q'= \bar{p}* p' $ and $p' \in U_p$. Then
$$
\tau_{\bar{p}} S \cap U_q = \{ \tilde{f}=0 \}.
$$
Furthermore, on $U_q$, and by left invariance,
\begin{align*}
\nabla_\mathbb{H} \tilde{f} = &\nabla_\mathbb{H} (  f \circ \tau_{\bar{p}}^{-1} ) =  \nabla_\mathbb{H} (  f \circ \tau_{\bar{p}^{-1}}) =
\sum_{i=1}^n \bigg (
X_i(  f \circ \tau_{\bar{p}^{-1}}) X_i + Y_i (  f \circ \tau_{\bar{p}^{-1}}) Y_i
\bigg )\\
=&
\sum_{i=1}^n \bigg (
[X_i (  f )\circ \tau_{\bar{p}^{-1}} ] X_i + [ Y_i (  f ) \circ \tau_{\bar{p}^{-1}} ] Y_i
\bigg )
 \neq 0
\end{align*}
as $X_i (  f )\circ \tau_{\bar{p}^{-1}}$ and $ Y_i (  f ) \circ \tau_{\bar{p}^{-1}}$ are defined on $U_p$ and on $U_p$ one of the two is always non-negative by the hypothesis that $\nabla_\mathbb{H} f \neq 0$ on $U_p$.
%For $X(f \circ \tau_p) = X(f) \circ \tau_q$ (left invariance of $X$) see, for example, the introduction of \cite{TRIP} .
\end{proof}

\begin{prop}\label{delta_prop}
Consider the usual anisotropic dilation $\delta_r : \mathbb{H}^n \to \mathbb{H}^n$
, $r>0$, and a $\mathbb{H}$-regular $1$-codimensional surface $S \subseteq  \mathbb{H}^n$. Then $\delta_r S := \{ \delta_r(p) ; \ p \in S   \}$ is again a $\mathbb{H}$-regular $1$-codimensional surface in $\mathbb{H}^n$.
\end{prop}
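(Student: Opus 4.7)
The plan is to mimic the proof of Proposition \ref{tau_prop} step by step, but using the homogeneity property of the horizontal vector fields under $\delta_r$ in place of the left invariance used for $\tau_{\bar p}$. First I would take an arbitrary point $q \in \delta_r S$, write it as $q = \delta_r(p)$ for some $p \in S$, and invoke the $\mathbb{H}$-regularity of $S$ at $p$ to obtain a neighbourhood $U_p$ of $p$ and a function $f \in C^1_{\mathbb{H}}(U_p, \mathbb{R})$ with $S \cap U_p = \{f = 0\}$ and $\nabla_\mathbb{H} f \neq 0$ on $U_p$.

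Next I would set $U_q := \delta_r(U_p)$, which is a neighbourhood of $q$ since $\delta_r$ is a homeomorphism (its inverse is $\delta_{1/r}$), and define $\tilde f := f \circ \delta_r^{-1} = f \circ \delta_{1/r}: U_q \to \mathbb{R}$. The level set identity is immediate: for any $q' = \delta_r(p') \in U_q$ with $p' \in U_p$, one has $\tilde f(q') = f(p')$, so $\delta_r S \cap U_q = \{\tilde f = 0\}$.

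The key step is verifying $\nabla_\mathbb{H} \tilde f \neq 0$ on $U_q$. Here I would use that $X_j$ and $Y_j$ are homogeneous of order $1$ with respect to $\delta_r$, which (applied with $1/r$ in place of $r$) yields
$$
X_j(f \circ \delta_{1/r}) = \tfrac{1}{r}\, (X_j f) \circ \delta_{1/r}, \qquad Y_j(f \circ \delta_{1/r}) = \tfrac{1}{r}\, (Y_j f) \circ \delta_{1/r},
$$
so that
$$
\nabla_\mathbb{H} \tilde f = \tfrac{1}{r} \sum_{j=1}^n \Big( [(X_j f) \circ \delta_{1/r}]\, X_j + [(Y_j f) \circ \delta_{1/r}]\, Y_j \Big).
$$
Since $\nabla_\mathbb{H} f \neq 0$ on $U_p$ and $\delta_{1/r}$ maps $U_q$ bijectively onto $U_p$, at each point of $U_q$ at least one of the coefficients is non-zero, so $\nabla_\mathbb{H} \tilde f \neq 0$ on $U_q$. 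Continuity of $\nabla_\mathbb{H} \tilde f$ (hence $\tilde f \in C^1_\mathbb{H}(U_q, \mathbb{R})$) follows from continuity of $\nabla_\mathbb{H} f$ and of $\delta_{1/r}$.

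I do not anticipate any serious obstacle: the only place where care is needed is in correctly applying the order-$1$ homogeneity of the horizontal vector fields (and noting that this produces only a harmless factor of $1/r$, which does not affect the non-vanishing of the horizontal gradient). The argument is then essentially parallel to the translation case.
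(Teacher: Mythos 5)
Your proof is correct and follows the same skeleton as the paper's: pull back to a point $p$ with $q=\delta_r(p)$, use $\mathbb{H}$-regularity at $p$, set $U_q=\delta_r(U_p)$ and $\tilde f = f\circ\delta_{1/r}$, and check the level-set identity and the non-vanishing of $\nabla_\mathbb{H}\tilde f$. The one place where you diverge is the justification of that last, crucial step: the paper invokes the fact that $\delta_{1/r}$ is a contact map together with an external lemma to write $\nabla_\mathbb{H}(f\circ\delta_{1/r}) = (\delta_{1/r})_*^T(\nabla_\mathbb{H} f)_{\delta_{1/r}} \neq 0$, whereas you compute directly from the order-$1$ homogeneity of $X_j$ and $Y_j$ (stated in Subsection 1.2 of the paper), obtaining the explicit formula $\nabla_\mathbb{H}\tilde f = \tfrac{1}{r}\sum_j\bigl([(X_jf)\circ\delta_{1/r}]X_j + [(Y_jf)\circ\delta_{1/r}]Y_j\bigr)$. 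Your route is more elementary and self-contained: the harmless scalar factor $1/r$ is visible, non-vanishing and continuity of the horizontal gradient are immediate, and no appeal to the contact-map machinery or to the cited lemma is needed; the paper's formulation, on the other hand, is phrased so as to generalise to other contact maps beyond dilations. Both arguments are sound.
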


\begin{proof}
Since $\delta_r S = \{ \delta_r(p) ; \ p \in S   \}$, then for all $ q \in \delta_r S$ there exists a point $  p \in S$ so that $q=\delta_r(p)$. For such $p \in S $, there exists a neighbourhood $U_p$ and a function $f: U_p \to \mathbb{R}$ so that $S \cap U_p = \{ f=0 \}$ and $\nabla_\mathbb{H} f \neq 0$ on $U_p$. 
Define $U_q := \delta_r( U_p )  $, which is a neighbourhood of $q=  \delta_r ( p )$, and a function $\tilde{f}: = f \circ \delta_{1/r} : U_q \to \mathbb{R} $. 
Then, for all $ q' \in U_q$,
$$
\tilde{f}(q') = (f \circ \delta_{1/r}   )(q')  =  f ( \delta_{1/r}  \delta_r p' ) = f(p')=0,
$$
where $q'= \bar{p}* p' $ and $p' \in U_p$. Then
$$
 \delta_r S \cap U_q = \{ \tilde{f}=0 \}.
$$
Furthermore, on $U_q$, using the fact that $\delta_{1/r}$ is a contact map and Lemma 3.3.10 in \cite{GClicentiate},
\begin{align*}
\nabla_{\mathbb{H}} \tilde{f} = \nabla_\mathbb{H} (  f \circ \delta_{1/r} ) = (\delta_{1/r})_*^T (\nabla_{\mathbb{H}}  f)_{\delta_{1/r}}  \neq 0.
\end{align*}
\end{proof}

%\underbrace{}_{}
\begin{prop}\label{letra}
Consider a left translation map $\tau_{\bar{p}} : \mathbb{H}^n \to \mathbb{H}^n$, $\bar{p} \in \mathbb{H}^n$, the anisotropic dilation $\delta_r : \mathbb{H}^n \to \mathbb{H}^n$, $r>0$ and a $\mathbb{H}$-regular $1$-codimensional surface $S \subseteq \mathbb{H}^n$. Then the $\mathbb{H}$-regular $1$-codimensional surfaces $\tau_{\bar{p}} S$  and $\delta_r S $ are $\mathbb{H}$-orientable (respectively) if and only if $S$ is $\mathbb{H}$-orientable.
\end{prop}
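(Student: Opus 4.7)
The plan is to establish both invariances in the ``if'' direction by pushing the orientation forward through the transformation, and then obtain the converses by applying the same argument to the inverse transformations $\tau_{\bar p^{-1}}$ and $\delta_{1/r}$.

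For the left translation, suppose $n_\mathbb{H}$ is a continuous, never-null, horizontal $\mathbb{H}$-normal $1$-vector field on $S$. I would define on $\tau_{\bar p} S$ the vector field
$$\tilde n_\mathbb{H}(q) := \sum_{i=1}^n \bigl(n_{\mathbb{H},i}(\tau_{\bar p^{-1}} q)\, X_i|_q + n_{\mathbb{H},n+i}(\tau_{\bar p^{-1}} q)\, Y_i|_q\bigr).$$
Continuity and never-nullness of $\tilde n_\mathbb{H}$ follow immediately from the fact that $\tau_{\bar p^{-1}}$ is a homeomorphism and the components $n_{\mathbb{H},j}$ are continuous and never simultaneously zero. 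The central point is that the $X_i, Y_i, T$ are left-invariant, so $(\tau_{\bar p})_*$ sends $X_i|_p$ to $X_i|_{\tau_{\bar p} p}$ and similarly for $Y_i$ and $T$. Consequently a tangent vector $w = (\tau_{\bar p})_* v \in T_q(\tau_{\bar p} S)$ has exactly the same coefficients in the frame $\{X_i, Y_i, T\}$ as $v \in T_p S$ does, and so
$$\langle \tilde n_\mathbb{H}(q), w|_{\mathfrak{h}_1}\rangle_H = \langle n_\mathbb{H}(p), v|_{\mathfrak{h}_1}\rangle_H = 0,$$
i.e., $\tilde n_\mathbb{H} \perp_H \tau_{\bar p} S$, as required by Definition \ref{def:Horientable}.

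For the dilation, I would apply the same construction, defining $\tilde n_\mathbb{H}(q) := \sum_i n_{\mathbb{H},i}(\delta_{1/r} q) X_i|_q + n_{\mathbb{H},n+i}(\delta_{1/r} q) Y_i|_q$ on $\delta_r S$. The only difference is the behaviour of the pushforward: one checks directly from $\delta_r(x,y,t)=(rx,ry,r^2 t)$ and the explicit formulas of Definition \ref{XYT} that $(\delta_r)_* X_i|_p = r\, X_i|_{\delta_r p}$, $(\delta_r)_* Y_i|_p = r\, Y_i|_{\delta_r p}$, and $(\delta_r)_* T|_p = r^2\, T|_{\delta_r p}$. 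Hence the horizontal part of a tangent vector $w=(\delta_r)_* v$ is just $r$ times that of $v$, and the orthogonality relation $\langle \tilde n_\mathbb{H}(q), w|_{\mathfrak{h}_1}\rangle_H = r\,\langle n_\mathbb{H}(p), v|_{\mathfrak{h}_1}\rangle_H = 0$ continues to hold. Continuity and never-nullness are argued identically.

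The main obstacle, if it deserves the name, is keeping the pushforward computations clean; but these are essentially already recorded in the proofs of Propositions \ref{tau_prop} and \ref{delta_prop} (for translations via the chain rule $\nabla_\mathbb{H}(f\circ\tau_{\bar p^{-1}}) = \sum_i (X_i f \circ \tau_{\bar p^{-1}}) X_i + (Y_i f \circ \tau_{\bar p^{-1}}) Y_i$, and implicitly in the dilation case through the contact map lemma cited there). An alternative route would bypass pointwise pushforwards altogether: writing $n_\mathbb{H} = \lambda \nabla_\mathbb{H} f$ locally as in Definition \ref{def:Horientable}, one sets $\tilde n_\mathbb{H} := (\lambda\circ\tau_{\bar p^{-1}})\nabla_\mathbb{H}\tilde f$ or $(\lambda\circ\delta_{1/r})\nabla_\mathbb{H}\tilde f$, which is a continuous global never-null horizontal normal by those same propositions. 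Finally, since $\tau_{\bar p^{-1}}$ and $\delta_{1/r}$ belong to the same families, the converses follow by symmetry.
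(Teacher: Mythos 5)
Your proposal is correct and follows essentially the same route as the paper: transport the components of $n_{\mathbb{H}}$ along the transformation, attach them to the left-invariant frame at the image point, and obtain the converse by applying the argument to $\tau_{\bar p^{-1}}$ and $\delta_{1/r}$. The only difference is one of explicitness: you verify directly (via $(\tau_{\bar p})_* X_i|_p = X_i|_{\tau_{\bar p}p}$ and $(\delta_r)_* X_i|_p = r\,X_i|_{\delta_r p}$) that the transported field remains $\mathbb{H}$-orthogonal to the image surface, a step the paper leaves implicit by appealing to the local representation $n_{\mathbb{H}} = \lambda\,\nabla_{\mathbb{H}} f$ together with Propositions \ref{tau_prop} and \ref{delta_prop}.
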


\begin{proof}
Remember that $\tau_{\bar{p}} S = \{ \bar{p}*p ; \ p \in S   \}$. From Proposition \ref{tau_prop}, one knows that for all $ q \in \tau_{\bar{p}} S$ there exists a point $ p \in S$ so that $q=\bar{p}*p$ and there exist a neighbourhood $U_p$ and a function $f: U_p \to \mathbb{R}$ so that $S \cap U_p = \{ f=0 \}$ and $\nabla_\mathbb{H} f \neq 0$ on $U_p$. 
Furthermore, $U_q = \tau_{\bar{p}} U_p = \bar{p} * U_p $ is a neighbourhood of $q=  \bar{p}*p$ and the function $\tilde{f}: = f \circ \tau_{\bar{p}}^{-1} : U_q \to \mathbb{R} $ is so that  $\tau_{\bar{p}} S \cap U_q = \{ \tilde{f}=0 \}$ and $\nabla_\mathbb{H} \tilde{f}  \neq 0$ on $U_q$.\\
Assume now that $S$ is $\mathbb{H}$-orientable, then there exists a global vector field
$$
n_{\mathbb{H}} = \sum_{j=1}^{n} \left (  n_{\mathbb{H},j} X_j + n_{\mathbb{H},n+j} Y_j  \right ),
$$
that, locally, takes the form of
$$
 \sum_{j=1}^{n}  \left ( 
\frac{ X_j f }{ \vert \nabla_\mathbb{H} f \vert }  X_j +  \frac{ Y_j f }{ \vert \nabla_\mathbb{H} f \vert }   Y_j \right ).
$$
Now we consider:
\begin{align*}
(\tau_{\bar{p}}^{-1})_*   n_\mathbb{H} = \sum_{j=1}^{n} \left (  
n_{\mathbb{H},j}  \circ \tau_{\bar{p}}^{-1} 
 {X_j}_{\tau_{\bar{p}}^{-1}} +
n_{\mathbb{H},n+j}  \circ \tau_{\bar{p}}^{-1} 
 {Y_j}_{\tau_{\bar{p}}^{-1}}  \right ),
\end{align*}
which, locally, becomes
\begin{align*}
(\tau_{\bar{p}}^{-1})_*   n_\mathbb{H} =
\sum_{j=1}^{n} \left (  
 \frac{ X_j f }{ \vert \nabla_\mathbb{H} f \vert }  \circ \tau_{\bar{p}}^{-1} 
 {X_j}_{\tau_{\bar{p}}^{-1}} +
\frac{ Y_j f }{ \vert \nabla_\mathbb{H} f \vert }  \circ \tau_{\bar{p}}^{-1} 
 {Y_j}_{\tau_{\bar{p}}^{-1}}  \right ).
\end{align*}
Note that this is still a global vector field and is defined on the whole $\tau_{\bar{p}} S$, therefore it gives an orientation to $\tau_{\bar{p}} S$. 
Since we can repeat the whole proof starting from $\tau_{\bar{p}} S$ to $S= \tau_{\bar{p}}^{-1} \tau_{\bar{p}} S$, this proves both directions.\\\\
For the dilation, remember that $\delta_r S = \{ \delta_r(p) ; \  p \in S   \}$. From Proposition \ref{delta_prop}, for all $ q \in \delta_r S$ there exists a point $ p \in S$ so that $q=\delta_r(p)$ and there exist a neighbourhood $U_p$ and a function $f: U_p \to \mathbb{R}$ so that $S \cap U_p = \{ f=0 \}$ and $\nabla_\mathbb{H} f \neq 0$ on $U_p$. 
In the same way, $U_q = \delta_r( U_p ) $ is a neighbourhood of $q=  \delta_r(p)$ and the function $\tilde{f}: = f \circ \delta_{1/r} : U_q \to \mathbb{R} $ is so that  $\delta_r S \cap U_q = \{ \tilde{f}=0 \}$ and $\nabla_\mathbb{H} \tilde{f}  \neq 0$ on $U_q$.\\
Assume now that $S$ is $\mathbb{H}$-orientable. Then there exists a global vector field
$$
n_\mathbb{H} = \sum_{j=1}^{n} \left (  n_{\mathbb{H},j}  X_j + n_{\mathbb{H},n+j}  Y_j  \right ),
$$
that, locally, is written as
$$
 \sum_{j=1}^{n}  \left ( 
\frac{ X_j f }{ \vert \nabla_\mathbb{H} f \vert }  X_j +  \frac{ Y_j f }{ \vert \nabla_\mathbb{H} f \vert }   Y_j \right ).
$$
Now we have
\begin{align*}
( \delta_{1/r} )_*   n_\mathbb{H} =  \sum_{j=1}^{n} \left (  
 n_{\mathbb{H},j}  \circ \delta_{1/r} 
 {X_j}_{\delta_{1/r}} +
n_{\mathbb{H},n+j}  \circ \delta_{1/r} 
 {Y_j}_{\delta_{1/r}}  \right ),
\end{align*}
which, locally,  becomes
\begin{align*}
( \delta_{1/r} )_*   n_\mathbb{H}  =
 \sum_{j=1}^{n} \left (  
 \frac{ X_j f }{ \vert \nabla_\mathbb{H} f \vert }  \circ \delta_{1/r} 
 {X_j}_{\delta_{1/r}} +
\frac{ Y_j f }{ \vert \nabla_\mathbb{H} f \vert }  \circ \delta_{1/r}
 {Y_j}_{\delta_{1/r}}  \right ).
\end{align*}
Note that this is still a global vector field and is defined on the whole $\delta_r S$, therefore it gives an orientation to $\delta_r S$. 
Since we can repeat the whole proof starting from $\delta_r S$ to $S=\delta_{1/r} \delta_r S$, this proves both directions.
\end{proof}

%-----------------------------------------------------------------------------------------------------------------------------------------------------
%-----------------------------------------------------------------------------------------------------------------------------------------------------
%-----------------------------------------------------------------------------------------------------------------------------------------------------

\subsubsection{Comparison}

If a surface is regular enough, we see that $\mathbb{H}$-orientability implies Euclidean-orientability. Lastly, this allows us to conclude that non-$\mathbb{H}$-orientable $\mathbb{H}$-regular surfaces exist, at least when $n=1$.\\

\noindent
Consider a $1$-codimensional $C^1$-Euclidean surface $S \subseteq \mathbb{H}^n$ with $C(S)\neq 0$. We say that $S$ is $C^2_\mathbb{H}$-regular if its horizontal normal vector field $n_\mathbb{H} \in C^1_\mathbb{H}$.

\begin{prop}\label{finalmente4}
Consider  a $1$-codimensional $C^1$-Euclidean surface $S$ in $\mathbb{H}^{n}$ with $C(S) = \varnothing$. Then the following holds: %,  $\dim_{\mathcal{H}_{cc}} S=2n+1$ ($S$) and $\dim_{\mathcal{H}_{E}} S=2n$.
\begin{enumerate}
\item Suppose $S$ is Euclidean-orientable. Recall from condition \eqref{eq2} that $C^1$-Euclidean means that for all $ p \in S$ there exists $ U \ni  p$ %\in \mathcal{U}_p$
 and $g : U \to \mathbb{R}$, $g \in C^1$, so that $S \cap U = \{ g=0 \}$ and $\nabla g \neq 0$  on $U$.
If, for any such $g$, no point of $S$ belongs to the set
$$
\left \{ 
 %\left (   - \frac{2    ( \partial_y   g )_p  }{ ( \partial_t   g )_p  }(t) ,  \frac{2    ( \partial_x   g )_p  }{ ( \partial_t   g )_p  }(t),t \right )
\left ( 
  - \frac{2    ( \partial_{y_1}   g )_p  }{ ( \partial_t   g )_p  } 
, \dots, 
  - \frac{2    ( \partial_{y_n}   g )_p  }{ ( \partial_t   g )_p  } 
,
  \frac{2    ( \partial_{x_1}   g )_p  }{ ( \partial_t   g )_p  }
,\dots,
  \frac{2    ( \partial_{x_n}   g )_p  }{ ( \partial_t   g )_p  }
,t
 \right )
 , \text{ with }   ( \partial_t   g )_p \neq 0  \right \},
$$
then
\begin{equation}\label{.1}
%S \text{ is Euclidean-orientable } \ \ \Longrightarrow \ \
 S \text{ is } \mathbb{H}\text{-orientable}.
\end{equation}
\item
%If $S \in C^2_\mathbb{H}$
If $S$ is  $C^2_\mathbb{H}$-regular,
\begin{equation}\label{.2}
%S \text{ is } \mathbb{H}\text{-orientable} \ \ \Longrightarrow \ \   S \text{ is Euclidean-orientable } .
S \text{ is } \mathbb{H}\text{-orientable implies }   S \text{ is Euclidean-orientable } .
\end{equation}
\end{enumerate}
\end{prop}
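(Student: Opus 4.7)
The plan is to prove both implications using a single local defining function $g$ for $S$. By the $C^1$-Euclidean hypothesis \eqref{eq2}, every $p \in S$ has a neighbourhood $U$ with $g \in C^1(U,\mathbb{R})$, $S \cap U = \{g = 0\}$ and $\nabla g \neq 0$ on $U$. Since $C^1 \subset C^1_{\mathbb{H}}$, the horizontal gradient $\nabla_{\mathbb{H}} g$ is automatically defined on $U$. The hypothesis $C(S) = \varnothing$ (for part (2)), together with the explicit avoidance condition (for part (1)), guarantees that $\nabla_{\mathbb{H}} g \neq 0$ at every $p \in S \cap U$: if $(\partial_t g)(p)=0$ this follows from $\nabla g \neq 0$, while if $(\partial_t g)(p)\neq 0$ the condition $\nabla_{\mathbb{H}} g(p)=0$ is precisely $p = (\bar x(p), \bar y(p), p_t)$ with $\bar x_j = -2(\partial_{y_j}g)_p/(\partial_t g)_p$ and $\bar y_j = 2(\partial_{x_j}g)_p/(\partial_t g)_p$, which is ruled out. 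Thus the same $g$ serves simultaneously as a Euclidean and as a Heisenberg defining function for $S$ on $U$.

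The engine of both implications is the pointwise identity
\[
\langle \nabla_{\mathbb{H}} g, \nabla g \rangle_E = |\nabla_{\mathbb{H}} g|_H^{\,2} > 0,
\]
where $\langle \cdot, \cdot \rangle_E$ is the standard Euclidean inner product on $\mathbb{R}^{2n+1}$ and $|\cdot|_H$ the Heisenberg norm on $\mathfrak{h}_1$. It is a direct computation: substitute $X_j g = \partial_{x_j} g - \tfrac{1}{2} y_j \partial_t g$ and $Y_j g = \partial_{y_j} g + \tfrac{1}{2} x_j \partial_t g$, expand $\nabla_{\mathbb{H}} g$ in the Euclidean basis $\{\partial_{x_j}, \partial_{y_j}, \partial_t\}$, and pair with $\nabla g$; the cross-terms proportional to $\partial_t g$ cancel exactly, leaving $\sum_j(X_j g)^2 + (Y_j g)^2$.

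For part (2), both $n_{\mathbb{H}}$ and $\nabla_{\mathbb{H}} g / |\nabla_{\mathbb{H}} g|$ are continuous horizontal unit vector fields lying in the one-dimensional $\langle \cdot, \cdot \rangle_H$-orthogonal complement of $T_p S \cap \mathfrak{h}_{1,p}$ in $\mathfrak{h}_{1,p}$, so they differ on $U \cap S$ by a locally constant sign $\epsilon_U \in \{\pm 1\}$. Define
\[
n_E := \epsilon_U \cdot \frac{\nabla g}{|\nabla g|} = \sgn \langle n_{\mathbb{H}}, \nabla g \rangle_E \cdot \frac{\nabla g}{|\nabla g|}.
\]
Independence of $g$: replacing $g$ by $\tilde g = \phi g$ with $\phi$ nonvanishing continuous gives $\nabla \tilde g = \phi \nabla g$ and $\nabla_{\mathbb{H}} \tilde g = \phi \nabla_{\mathbb{H}} g$ on $S$, so both occurrences of $\sgn \phi$ cancel. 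Continuity and non-vanishing are immediate from the key identity, since $\langle n_{\mathbb{H}}, \nabla g \rangle_E = \epsilon_U |\nabla_{\mathbb{H}} g| \neq 0$. Hence $n_E$ is a globally defined continuous, never-null Euclidean normal, witnessing Euclidean-orientability.

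Part (1) is the mirror construction: from a globally continuous $n_E$, set
\[
n_{\mathbb{H}} := \sgn \langle n_E, \nabla g \rangle_E \cdot \frac{\nabla_{\mathbb{H}} g}{|\nabla_{\mathbb{H}} g|},
\]
which by the same compatibility argument is globally continuous, horizontal, never-null and $\mathbb{H}$-orthogonal to $S$. The main technical obstacle in either direction is the chart-compatibility of the sign extraction under the transition $g \mapsto \phi g$; this reduces to tracking the proportionalities $\nabla(\phi g)|_S = \phi\,\nabla g|_S$ and $\nabla_{\mathbb{H}}(\phi g)|_S = \phi\,\nabla_{\mathbb{H}} g|_S$, which guarantee that the sign factor and the normalized vector are scaled by the same $\phi$ and the quotient is invariant.
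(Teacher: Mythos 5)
Your argument is correct, and for implication \eqref{.2} it takes a genuinely different route from the paper. For \eqref{.1} you essentially reproduce the paper's proof: locally $n_{\mathbb{H}}$ is a multiple of $\nabla_{\mathbb{H}}g$ for the Euclidean defining function $g$, and the only real content is that the avoidance condition (equivalently $C(S)=\varnothing$) forces $\nabla_{\mathbb{H}}g\neq 0$, split into the cases $(\partial_t g)_p=0$ and $(\partial_t g)_p\neq 0$; the paper globalizes by the explicit formulas $n_{\mathbb{H},i}:=n_{E,i}-\tfrac12 y_i n_{E,2n+1}$, $n_{\mathbb{H},n+i}:=n_{E,n+i}+\tfrac12 x_i n_{E,2n+1}$ applied to the global $n_E$, whereas you globalize by a sign-cancellation argument across charts --- same substance. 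For \eqref{.2} the paper reconstructs the missing vertical component of the Euclidean normal by \emph{differentiating} $n_{\mathbb{H}}$, setting $n_{E,2n+1}:=\tfrac1n\sum_j\left(X_j n_{\mathbb{H},n+j}-Y_j n_{\mathbb{H},j}\right)$, which locally recovers $Tf=\partial_t f$; this is exactly where the $C^2_{\mathbb{H}}$ hypothesis enters. You instead exploit the standing $C^1$-Euclidean hypothesis: the local unit Euclidean normal $\pm\nabla g/|\nabla g|$ already exists, and $n_{\mathbb{H}}$ is only used to select its sign coherently via $\sgn\langle n_{\mathbb{H}},\nabla g\rangle_E$, which is well defined and continuous because of the (correct) identity $\langle\nabla_{\mathbb{H}}g,\nabla g\rangle_E=|\nabla_{\mathbb{H}}g|_H^2>0$ and the fact that $n_{\mathbb{H}}$ and $\nabla_{\mathbb{H}}g$ span the same line in $\mathfrak{h}_{1,p}$ at non-characteristic points. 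Your version never differentiates $n_{\mathbb{H}}$, so the $C^2_{\mathbb{H}}$ hypothesis is not used at all --- a genuine strengthening under the stated hypotheses; what the paper's construction buys in exchange is an explicit formula for $n_E$ built purely from $n_{\mathbb{H}}$, which would be the relevant mechanism if Euclidean $C^1$ regularity were not assumed from the outset. Two cosmetic points: your sign $\epsilon_U$ should be $\sgn$ of the proportionality factor $\rho$ with $n_{\mathbb{H}}=\rho\,\nabla_{\mathbb{H}}g$ rather than $\pm1$ (you do not assume $n_{\mathbb{H}}$ is unit), and chart-independence is cleaner if argued from $\nabla g'=\phi\,\nabla g$ on $S\cap U\cap U'$ (proportionality of nonzero normals to the same hyperplane) rather than from the ansatz $\tilde g=\phi g$; neither affects correctness.
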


\noindent
The proof will follow at the end of this chapter. A question arises naturally about the extra conditions for the first implication in Proposition \ref{finalmente4}: what can we say about that set? Is it possible to do better?\\
Note also that, if we could simply assume that the functions $f$ and $g$, respectively of conditions \eqref{eq1} and \eqref{eq2}, would be the same, then we would not need the extra condition in the first implication and we would have that  Euclidean-orientability implies $\mathbb{H}$-orientability (see Lemma 4.3.23 in \cite{GClicentiate}).\\

\noindent
We have seen, by Proposition \ref{Mobius}, that $ \widetilde{\mathcal{M}}$ is a $\mathbb{H}$-regular surface and not Euclidean-orientable. Furthermore, $ \widetilde{\mathcal{M}}$ satisfies the hypotheses of Proposition \ref{finalmente4} and implication \eqref{.2} reversed says that $\widetilde{\mathcal{M}}$ is not a $\mathbb{H}$-orientable $\mathbb{H}$-regular surface. Then we can say:

\begin{cor}\label{cor:existence}
There exist $\mathbb{H}$-regular surfaces which are not $\mathbb{H}$-orientable in $\mathbb{H}^1$.
\end{cor}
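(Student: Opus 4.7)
The corollary is essentially a direct consequence of the two main results already established, combined by contraposition, and the plan is to make this explicit. Concretely, I would take $\widetilde{\mathcal{M}} \subseteq \mathbb{H}^1$ from Proposition \ref{Mobius}, which is a $1$-codimensional $C^1$-Euclidean surface with $C(\widetilde{\mathcal{M}}) = \varnothing$ (hence $\mathbb{H}$-regular by the remark at the end of Subsection \ref{subsec:Hreg}) and which fails to be Euclidean-orientable. Then I would apply the contrapositive of implication \eqref{.2} in Proposition \ref{finalmente4}: if $\widetilde{\mathcal{M}}$ is $C^2_\mathbb{H}$-regular, then its failure to be Euclidean-orientable forces it to fail $\mathbb{H}$-orientability as well.

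The only real content, then, is checking that $\widetilde{\mathcal{M}}$ actually satisfies the $C^2_\mathbb{H}$-regularity hypothesis. This should be essentially automatic: the parametrisation $\gamma(r,s)$ from Subsection \ref{subsec:Mobius} is built from sines and cosines, so it is $C^\infty$ in the Euclidean sense; removing a neighbourhood of the single characteristic point $\tilde p$ with smooth boundary preserves this, so $\widetilde{\mathcal{M}}$ is $C^\infty$-Euclidean. Locally near any point $p \in \widetilde{\mathcal{M}}$ one can find a defining function $g \in C^\infty(U,\mathbb{R})$ with $\nabla g \neq 0$ and $\widetilde{\mathcal{M}} \cap U = \{g = 0\}$. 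Because $X_j$ and $Y_j$ are first-order differential operators with smooth coefficients, the horizontal gradient $\nabla_\mathbb{H} g$ is smooth on $U$; because $C(\widetilde{\mathcal{M}}) = \varnothing$, we have $|\nabla_\mathbb{H} g| > 0$ on a neighbourhood of $p$ in $\widetilde{\mathcal{M}}$, so the horizontal normal
\[
n_\mathbb{H} = \frac{\nabla_\mathbb{H} g}{|\nabla_\mathbb{H} g|}
\]
is at least $C^1_\mathbb{H}$ on $\widetilde{\mathcal{M}}$. By the definition given just before Proposition \ref{finalmente4}, this means $\widetilde{\mathcal{M}}$ is $C^2_\mathbb{H}$-regular.

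With the hypotheses of Proposition \ref{finalmente4} verified, I would conclude as follows: if $\widetilde{\mathcal{M}}$ were $\mathbb{H}$-orientable, then implication \eqref{.2} would give that $\widetilde{\mathcal{M}}$ is Euclidean-orientable, directly contradicting Proposition \ref{Mobius}. Hence $\widetilde{\mathcal{M}}$ is an $\mathbb{H}$-regular surface in $\mathbb{H}^1$ that is not $\mathbb{H}$-orientable, which proves the corollary.

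I do not expect any genuine obstacle here; if anything, the subtlest point is the routine verification that the smoothness of the parametrisation transfers to the $C^2_\mathbb{H}$-regularity of $\widetilde{\mathcal{M}}$ after passing through a local Euclidean defining function $g$ (rather than directly through the Heisenberg defining function $f$ of condition \eqref{eq1}). This is unproblematic because both $g$ and its Euclidean partial derivatives are $C^\infty$ on $U$, and the non-vanishing of $\nabla_\mathbb{H} g$ on $\widetilde{\mathcal{M}}$ is exactly the condition $C(\widetilde{\mathcal{M}}) = \varnothing$ already secured by Proposition \ref{Mobius}.
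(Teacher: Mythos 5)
Your proposal is correct and follows essentially the same route as the paper: apply the contrapositive of implication \eqref{.2} in Proposition \ref{finalmente4} to the surface $\widetilde{\mathcal{M}}$ produced by Proposition \ref{Mobius}. The only difference is that you spell out the verification that $\widetilde{\mathcal{M}}$ is $C^2_\mathbb{H}$-regular (via the smoothness of the parametrisation and the non-vanishing of $\nabla_\mathbb{H} g$), a point the paper asserts without detail, so your write-up is if anything slightly more complete.
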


\noindent
%This opens the possibility, among others, to study surfaces that are, in the Heisenberg sense, regular but not orientable. This opens the possibility for an analysis of Heisenberg currents mod $2$, which goes beyond the scope of this paper.
This opens the possibility to analysis of Heisenberg currents mod $2$ by studying surfaces that are, in the Heisenberg sense, regular but not orientable.

\begin{proof}[Proof of implication \eqref{.1} in Proposition \ref{finalmente4}]
We know there exists a global vector field 
$$ 
n_{E}=\sum_{i=1}^{n} \left ( n_{E,i} \partial_{x_i} +n_{E,n+i} \partial_{y_i} \right ) + n_{E,2n+1} \partial_t  \neq 0 
$$ 
that can be written locally on an open set $U\subseteq \mathbb{H}^n$ as 
$$
 n_{E}=\mu \sum_{i=1}^{n} \left (  \partial_{x_i} g  \partial_{x_i} +  \partial_{y_i} g  \partial_{y_i} \right ) + \mu \partial_t  g  \partial_t
$$
 so that $\nabla g  \neq 0$ and $g \in C^1(U,\mathbb{R})$.  
Define
$$
\begin{cases}
n_{\mathbb{H},i} :=n_{E,i} -\frac{1}{2}y_i \cdot n_{E,2n+1},\\
n_{\mathbb{H},n+i} := n_{E,n+i} +\frac{1}{2}x_i \cdot n_{E,2n+1},
\end{cases}
\quad  i=1,\dots,n.
$$
For each point $p$ there exists a neighbourhood $U$ where such $g$ is defined as above; locally in such sense, we get % This becomes
$$
\begin{cases}
n_{\mathbb{H},i} =\mu \partial_{x_i}   g -\frac{1}{2} y_i \mu \partial_t   g= \mu  X_i g,\\
n_{\mathbb{H},n+i} = \mu \partial_{y_i}   g +\frac{1}{2} x_i \mu \partial_t   g= \mu  Y_i g ,
\end{cases}
\quad  i=1,\dots,n,
$$
where $\mu$ is simply a normalising factor that, from now on, we ignore.\\
In order to verify the $\mathbb{H}$-orientability, we have to show that $\nabla_{\mathbb{H}} g \neq 0$ .  
Note here that $ C^1(U,\mathbb{R}) \subsetneq C_{\mathbb{H}}^1(U,\mathbb{R}), $ so $g$ is regular enough.\\ 
Consider first the case in which $( \partial_t   g )_p =0$. We still have that $\nabla_{p} g \neq 0$, so at least one of the derivatives $ ( \partial_{x_i}   g )_p , \ ( \partial_{y_i}   g )_p $ must be different from zero in $p$. But, when $( \partial_t   g )_p =0$, then  $(X_i g)_p= ( \partial_{x_i}   g )_p$ and $(Y_i g)_p= ( \partial_{y_i}   g )_p $, so
$$
\norm{ \nabla_{\mathbb{H},p} g }^2=(X_i g)_p^2 + (Y_i g)_p^2  \neq 0.
$$
\noindent
Second, consider the case when $( \partial_t   g )_p \neq 0  $. In this case:
$$
\norm{ \nabla_{\mathbb{H},p} g }^2 =\sum_{i=1}^{n} (X_i g)_p^2 + (Y_i g)_p^2 = \sum_{i=1}^{n}  \left  (  \partial_{x_i} g  -\frac{1}{2}y_{i,p}  \partial_t   g \right )_p^2 + \left (  \partial_{y_i} g + \frac{1}{2}x_{i,p}  \partial_t   g  \right )_p^2  \neq 0
$$
is equivalent to the fact that there exists $i \in \{1,\dots, n\}$ such that
$$
 y_{i,p}   \neq  \frac{2    ( \partial_{x_i}   g )_p  }{ ( \partial_t   g )_p  } \  \text{ or } \   x_{i,p}   \neq  - \frac{2    ( \partial_{y_i} g )_p  }{ ( \partial_t   g )_p  }.
$$
So the Heisenberg gradient of $g$ in $p$ is zero at the points\\
$$
% \left \{  
\left ( 
  - \frac{2    ( \partial_{y_1}   g )_p  }{ ( \partial_t   g )_p  } 
, \dots, 
  - \frac{2    ( \partial_{y_n}   g )_p  }{ ( \partial_t   g )_p  } 
,
  \frac{2    ( \partial_{x_1}   g )_p  }{ ( \partial_t   g )_p  }
,\dots,
  \frac{2    ( \partial_{x_n}   g )_p  }{ ( \partial_t   g )_p  }
,t
 \right )
% \right \}   .
$$
and the first implication of the proposition is true.% except in those points, where $( \partial_t   g )_p \neq 0$.
\end{proof}

%-------------------------------------------------------------------------------------------------------------------------------

\begin{proof}[Proof of implication \eqref{.2} in Proposition \ref{finalmente4}]
In the second case \eqref{.2}, we know that there exists a global vector 
$$
 n_{\mathbb{H}}= \sum_{i=1}^{n} n_{\mathbb{H},i} X_i + n_{\mathbb{H},n+i} Y_i  \neq 0 
$$
 that can be written locally as 
$$
 n_{\mathbb{H}}= \sum_{i=1}^{n} \lambda X_i f   X_i + \lambda Y_i f  Y_i 
$$
 so that $\nabla_{\mathbb{H}} f  \neq 0 , \ f \in C_{\mathbb{H}}^1(U,\mathbb{R})$, with $U\subseteq \mathbb{H}^n$ open. As before, $\lambda$ is simply a normalising factor that, from now on, we ignore.\\
Note that $n_{\mathbb{H}} \in C_{\mathbb{H}}^1(U,\mathbb{R})$, which is the same as asking $S$ to be $C_\mathbb{H}^2$-regular. Then define
$$
\begin{cases}
n_{E,2n+1}:=\frac{1}{n} \sum_{j=1}^{n} \left ( X_j n_{\mathbb{H},n+j} - Y_j n_{\mathbb{H},j} \right ), \\
n_{E,i} := n_{\mathbb{H},i} +\frac{1}{2}y_i \cdot n_{E,2n+1}, \\
n_{E,n+i} := n_{\mathbb{H},n+i} -\frac{1}{2}x_i \cdot n_{E,2n+1} ,
\end{cases}
\quad  i=1,\dots,n.
$$
For each point $p$ there exists a neighbourhood $U$ where such $f$ is defined. Locally in such sense, we can write the above as:
$$
n_{E,2n+1}
= \frac{1}{n}  \sum_{j=1}^{n} \left (   X_j Y_j f - Y_j   X_j f \right ) 
=\frac{1}{n} n  Tf=  \partial_t f.
$$
So now we have that
$$
\begin{cases}
n_{E,2n+1}=   \partial_t f,\\
n_{E,i} =  \partial_{x_i}  f -\frac{1}{2} {y_i}  \partial_t  f  +\frac{1}{2} {y_i}  \partial_t  f=  \partial_{x_i}  f ,\\
n_{E,n+i} =  \partial_{y_i}  f +\frac{1}{2} {x_i}  \partial_t  f  -\frac{1}{2} {x_i} \partial_t  f =  \partial_{y_i}  f ,
\end{cases}
\quad  i=1,\dots,n.
$$
In order to verify the Euclidean-orientability, we have to show that $\nabla f \neq 0$ . \\
Note that $f \in C_{\mathbb{H}}^1(U,\mathbb{R})$ and, a priori, we do not know whether $f \in C^1(U,\mathbb{R})$. However, asking $n_{\mathbb{H}} \in C_{\mathbb{H}}^1(U,\mathbb{R})$ allows us to write $\partial_{x_i}, \partial_{y_i} $ and $ \partial_t$ using only $X_i, Y_i, n_{\mathbb{H},i} $  and  $ n_{\mathbb{H},n+i}$, which guarantees that  $\partial_{x_i} f, \partial_{y_i} f$ and $ \partial_t f$ are well defined.\\
Now, $\nabla f \neq 0$ if and only if
\begin{align*}
\sum_{i=1}^{n} \left ( (\partial_{x_i} f)^2 + (\partial_{y_i} f)^2 \right ) + (\partial_t f)^2 \neq 0 ,
\end{align*}
which is the same as
\begin{align*}
& \sum_{i=1}^{n} \left [ \left  ( X_i f + \frac{1}{2}y_i  T f \right  )^2 + \left ( Y_i f - \frac{1}{2}x_i T f \right   )^2 + \left ( T f \right )^2 \right ] \neq 0 .
\end{align*}
In the case  $Tf \neq 0$, we have that $\nabla f \neq 0$ immediately. In the case $Tf=0$, instead, we have that $\nabla f \neq 0$ if and only if
$$
 \sum_{i=1}^{n} \left [ \left ( X_i f \right )^2+ \left ( Y_i f  \right   )^2 \right ] \neq 0,
$$
which is true because $\nabla_{\mathbb{H}} f \neq 0$. This completes the cases and shows that there actually is a global vector field $n_E$ that is continuous (by hypotheses) and never zero. So the second implication of the proposition is true.
\end{proof}

%-------------------------------------------------------------------------------------------------------------------------------------------
%---------------------------------------------------------BIBLIOGRAPHY----------------------------------------------------------------
%-------------------------------------------------------------------------------------------------------------------------------------------

%\bibliographystyle{abbrv} %no .bst extension         
\bibliography{Bibliography_United_Thesis_190520} %no .bib extensions
\bibliographystyle{abbrv}

\end{document}